\newtheorem{theorem}{\noindent Theorem}[section]
\newtheorem{proposition}[theorem]{\noindent Proposition}
\newtheorem{lemma}[theorem]{\noindent Lemma}
\newtheorem{remark}[theorem]{\noindent Remark}
\begin{document}
\renewcommand{\baselinestretch}{1.3}

%%%%%%%%%%%%%%%%%%%%%%%%

\begin{center}
    {\large \bf Normalized ground states for  Kirchhoff equations in ${\mathbb{R}}^{3}$\\ with a critical nonlinearity
    }
\vspace{0.5cm}\\
{Penghui Zhang$^{1,*}$,\quad Zhiqing
Han$^{1,*}$}\\\vspace{0.3cm}
{\footnotesize $^{1}$School of Mathematical Sciences, Dalian  University of Technology,
Dalian 116000, China }\\ \vspace{0.2cm}
\end{center}

%%%%%%%%%%%%%%%%%%%%%%%%

\renewcommand{\theequation}{\arabic{section}.\arabic{equation}}
\numberwithin{equation}{section}

%%%%%%%%%%%%%%%%%%%%%%%%%%%%%%%%%%%%%%%%%%%%%%%%%%%%%%%%%%%%%%%%%%%%

%%%%%%%%%%%%%%%%%%%%%%%%%%%%%%%%%%%%%%%%%%%%%%%%%%%%%%%%%%%%%%%%%%%%%%
\begin{abstract}
This paper is concerned with  the existence of ground states  for a class of Kirchhoff type equation with combined power nonlinearities
\begin{equation*}
-\left(a+b\int_{\mathbb{R}^{3}}|\nabla u(x)|^{2}\right) \Delta u =\lambda u+|u|^{p-2}u+u^{5}\quad  \ \text{for some} \ \lambda\in\mathbb{R},\quad x\in\mathbb{R}^{3},
\end{equation*}
with prescribed
$L^{2}$-norm mass
\begin{equation*}
  \int_{\mathbb{R}^{3}}u^{2}=c^{2}
\end{equation*}
in Sobolev critical case and proves that the equation  has a couple of solutions $(u_{c},\lambda_{c})\in S(c)\times \mathbb{R}$ for any $c>0$, $a,b >0$ and $\frac{14}{3}\leq p< 6,$ where $S(c)=\{u\in H^{1}(\mathbb{R}^{3}):\int_{\mathbb{R}^{3}}u^{2}=c^{2}\}.$

\textbf{Keywords:} Kirchhoff type equation;  Critical nonlinearity; Normalized ground states

\noindent{AMS Subject Classification:\, 37L05; 35B40; 35B41.}

\end{abstract}

\vspace{-1 cm}

%%%%%%%%%%%%%%%%%%%%%%%%%%%%%%%%%%%%%%%%%%%%%%%%%%%%%%%%%%%%%%%%%%%%
\footnote[0]{\hspace*{-7.4mm}
$^{*}$Corresponding author:
Email address: zhangpenghui@mail.dlut.edu.cn; hanzhiq@dlut.edu.cn\\
}

%%%%%%%%%%%%%%%%%%%%%%%%%%%%%%%%%%%%%%%%%%%%%%%%%%%%%%%%%%%%%%%%%%%%%%

\section{Introduction}
In this paper, we study the existence of normalized ground states for the following  Kirchhoff equation:
\begin{equation}\label{K}
-\left(a+b\int_{\mathbb{R}^{3}}|\nabla u(x)|^{2}\right) \Delta u =\lambda u+|u|^{p-2}u+u^{5}\quad  \ \text{for some} \ \lambda\in\mathbb{R},\quad x\in\mathbb{R}^{3},
\end{equation}
with prescribed mass
\begin{equation*}
  \int_{\mathbb{R}^{3}}u^{2}=c^{2},
\end{equation*}
where $a, b>0$ are constants and $\frac{14}{3}\leq p< 6$.
The weak solutions for the problem correspond to the critical points for the  energy functional
\begin{equation*}
  E(u)=\frac{a}{2}\int_{\mathbb{R}^{3}}|\nabla u|^{2}+\frac{b}{4}\left(\int_{\mathbb{R}^{3}}|\nabla u|^{2}\right)^{2}-\frac{1}{p}\int_{\mathbb{R}^{3}}|u|^{p}-\frac{1}{6}\int_{\mathbb{R}^{3}}|u|^{6}
\end{equation*}
on the constraint manifold
$$ S(c)=\{u\in H^{1}(\mathbb{R}^{3}):\Psi(u)=\frac{1}{2}c^{2}\},$$
where $\Psi(u)=\frac{1}{2}\int_{\mathbb{R}^{3}}u^{2}$. Equation (\ref{K}) is viewed as being nonlocal because of the appearance of the term $b\int_{\mathbb{R}^{3}}|\nabla u(x)|^{2}\Delta u$, which indicates that  equation (\ref{K}) is no longer a pointwise identity. The nonlocal term also results in  lack of  weak sequential continuity of the energy function associated to (\ref{K}), even  we remove the critical term $u^{5}$. If $\mathbb{R}^3$ is replaced by a bounded domain $\Omega\subset\mathbb{R}^{3}$, then equation (\ref{K}) describes the stationary state of the Kirchhoff type equation of the following type:
\begin{equation}\label{Kirchhoff-equ}
\begin{cases}
u_{tt} -\left(a+b\int_{\Omega}|\nabla u(x)|^{2}\right)\Delta u=f(x,u), &x\in\Omega,\\
u=0, &x\in\partial{\Omega},
\end{cases}
\end{equation}
which is presented by Kirchhoff in \cite{Kirchhoff}. It is an extension of D'Alembert's wave equation by considering the effects of the length of  strings during vibrations.

Problem (\ref{Kirchhoff-equ}) has received much attention after Lions \cite{Lions} proposed an abstract framework to deal with the problem. We refer the readers to \cite{Alves-1,Alves-2,Figueiredo-1,Figueiredo-2,Liu-Guo} and  the work \cite{Alves-2} seems to be the first one studying the critical Kirchhoff problem.

More recently, normalized solutions for elliptic equations have attracted considerable attentions,  e.g. see \cite{Bartsch-Jeanjean,Bartsch-Soave,Jeanjean-1,Liu-Guo,Luo-Zhang,Noris-1,Noris-2,Noris-3,Pierotti,Soave-1,Soave-2} and the references therein.   The work \cite{Jeanjean-1} is the first paper to deal
with the existence of normalized solutions for a second order Schr\"{o}dinger equation  with a Sobolev sub-critical and  $L^{2}$-supercritical nonlinearity and the papers \cite{Noris-1,Noris-2,Noris-3,Pierotti} deal
with the existence of normalized solutions for the problem  in bounded domains. When $b=0,$ problem (\ref{K}) becomes
\begin{equation*}
  -\Delta u =\lambda u+|u|^{p-2}u+u^{5},\quad  \ \lambda\in\mathbb{R},\quad x\in\mathbb{R}^{3},
\end{equation*}
with prescribed mass
\begin{equation*}
  \int_{\mathbb{R}^{3}}u^{2}=c^{2},
\end{equation*}
which was  recently investigated by   Soave  in  \cite{Soave-1}, where in case of  $p\in(2,6)$ the author  studied the existence and properties of the  ground states for the problem.

For  Kirchhoff type problems with a prescribed mass,  it is showed that $p=\frac{14}{3}$ is the $L^{2}$-critical exponent for the minimization problem ( \cite{Ye-1,Ye-2}). The papers \cite{Li-Ye,Ye-1,Zeng-Zhang}  consider the existence and properties  of the  $L^{2}$-subcritical constrained minimizers.  In the case of  $p\in(\frac{14}{3},6)$, the corresponding functional is unbounded from below on $S(c)$,  \cite{Luo-Wang} proved that there are infinitely many critical points by using a minimax procedure. However, few literature is concerned with normalized solutions for critical Kirchhoff problem. Inspired by \cite{Soave-1}, in this paper we
attempt to study the critical Kirchhoff problem (\ref{K}).

Our main result  is the following:
\begin{theorem}\label{main reslut}
Let $a,b >0$ and $\frac{14}{3}\leq p< 6$. Then   problem (\ref{K}) has a couple of solutions $(u_{c},\lambda_{c})\in S(c)\times \mathbb{R}$ for any $c>0$. Moreover,
\begin{equation}\label{Gound}
E(u_{c})=\inf_{u\in V(c)} E(u),
\end{equation}
where $ V(c)$ is the Pohozaev manifold defined in lemma \ref{pohozaev}.
\end{theorem}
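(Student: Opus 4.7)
The plan is to build the Pohozaev manifold $V(c)$ using an $L^2$-preserving scaling, realize $\inf_{V(c)} E$ as a minimax value, and extract a convergent Palais--Smale sequence by comparing the min-max level with a threshold coming from the Sobolev critical embedding. First, using the scaling $(t\star u)(x):=t^{3/2}u(tx)$, which is an $L^2$-isometry on $H^1(\mathbb{R}^3)$, I would study the fiber map
$$\psi_u(t) := E(t\star u) = \frac{a t^2}{2}\|\nabla u\|_2^2 + \frac{b t^4}{4}\|\nabla u\|_2^4 - \frac{t^{3(p-2)/2}}{p}\|u\|_p^p - \frac{t^6}{6}\|u\|_6^6.$$
Since $4\le 3(p-2)/2<6$ for $14/3\le p<6$, one has $\psi_u(0^+)=0$, $\psi_u$ is positive for small $t$, and $\psi_u(t)\to -\infty$ as $t\to\infty$. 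A sign analysis of $\psi'_u$ shows that $\psi_u$ has a unique strict maximizer $t_u>0$, and $t_u\star u\in V(c)$. This identifies $V(c)$ as a natural constraint on which $E$ is bounded below.

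Next, I would realize $m(c):=\inf_{V(c)} E$ as a mountain pass value, following the device of Jeanjean: work with the enlarged functional $\widetilde E(s,u):=E(s\star u)$ on $\mathbb{R}\times S(c)$, whose mountain pass geometry is dictated by the shape of $\psi_u$. An Ekeland-type minimax argument (via Ghoussoub's principle, or a deformation lemma that approximately preserves the Pohozaev condition) produces a Palais--Smale sequence $\{u_n\}\subset S(c)$ for $E$ at level $m(c)$ with the additional property $P(u_n)\to 0$. Combining the energy bound with the Pohozaev identity gives uniform $H^1$-boundedness of $\{u_n\}$.

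The hard part is recovering compactness. Because of the Sobolev-critical term $u^5$ and the nonlocal Kirchhoff coupling (which destroys weak sequential continuity), the sequence may lose mass. Writing $u_n\rightharpoonup u_c$ after suitable translations, one measures the defect $\ell:=\lim_n \|\nabla(u_n-u_c)\|_2^2$. Using the Sobolev inequality $S\|v\|_6^2\le \|\nabla v\|_2^2$, the asymptotic form of $P(u_n)=0$, and the nonlocal interaction of the $b$-term with the limit profile, one shows that nontrivial concentration ($\ell>0$) forces $m(c)$ to be bounded below by an explicit critical threshold depending only on $a$, $b$ and $S$. The crucial estimate is thus the strict inequality
$$m(c) < \text{the critical threshold},$$
which I would prove by evaluating $E$ on a cut-off and mass-corrected Aubin--Talenti bubble $v_\varepsilon\in S(c)$: the subcritical perturbation $|u|^{p-2}u$, being $L^2$-supercritical at $p\ge 14/3$, produces a negative correction of larger order than the mass-constraint error and the remainder from the cut-off, yielding the desired gap.

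Once the strict inequality rules out concentration ($\ell=0$), $u_n\to u_c$ strongly in $H^1$ up to translations, so $u_c\in V(c)\cap S(c)$ attains $E(u_c)=m(c)$. The Lagrange multiplier rule on $S(c)$ then yields $\lambda_c\in\mathbb{R}$ such that $(u_c,\lambda_c)$ solves (\ref{K}), completing the proof.
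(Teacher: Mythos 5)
Your strategy coincides with the paper's almost point for point: the $L^2$-preserving fiber map and the unique maximizer $t_u$ identifying $V(c)$, Jeanjean's auxiliary functional to produce a Palais--Smale sequence with the extra information $P(u_n)\to 0$, the strict upper bound $m(c)<\gamma_c^*$ via truncated Aubin--Talenti bubbles with the $L^2$-supercritical term supplying the dominant negative correction, and a Brezis--Lieb/Sobolev threshold argument to exclude loss of compactness. Two steps, however, are thinner than what is actually needed and would fail as written.

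First, ``$u_n\to u_c$ strongly in $H^1$ up to translations'' does not follow from ruling out Sobolev-critical concentration alone: the threshold argument controls the bubble at level $\gamma_c^*$, but it says nothing about vanishing of the subcritical term or about dichotomy (mass escaping in several bumps), and for a mass-constrained problem translations do not automatically preserve strong $L^2$ convergence to an element of $S(c)$. The paper avoids all of this by working in $H^1_r(\mathbb{R}^3)$ from the start, so that the compact embedding $H^1_r\hookrightarrow L^q$, $2<q<6$, gives $v_n\to v$ in $L^p$ for free; this is used twice, once to show that a trivial weak limit forces $\|v_n\|_6^6\to al+bl^2$ and hence the contradiction with $\gamma_c<\gamma_c^*$, and once in the splitting. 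You should either restrict to radial functions or supply a genuine concentration--compactness dichotomy argument (e.g.\ strict subadditivity of $c\mapsto m(c)$), which is extra work your sketch does not contain. Second, you never address the Lagrange multiplier's sign or the nontriviality of the weak limit. In the paper these are the same issue: $\lambda c^2=(\beta_p-1)\|v\|_p^p\le 0$, so $\lambda<0$ iff $v\not\equiv 0$, and $\lambda<0$ is then indispensable because the Brezis--Lieb splitting is carried out for the frozen functional $J_\lambda$ with the equivalent norm $a\|\nabla u\|_2^2-\lambda\|u\|_2^2$; moreover the weak limit a priori solves the equation with coefficient $a+bA^2$, $A^2=\lim\|\nabla v_n\|_2^2$, rather than $a+b\|\nabla v\|_2^2$, and only the final strong convergence identifies the two. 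Your phrase about ``the nonlocal interaction of the $b$-term with the limit profile'' gestures at this, but the order of the argument (first $v\neq 0$ and $\lambda<0$, then the splitting) has to be made explicit for the proof to close.
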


\section{Preliminaries}
To prove our theorem, we need some notations and useful preliminary results.

Throughout this paper, we denote $B_{r}(z)$ the open ball of radius $r$ with center at $z$, and $\|\cdot\|_{p}$ the usual norm of space $L^{p}(\mathbb{R}^{3})$. Let $H=H^{1}(\mathbb{R}^{3})$ with the usual norm of space $H^{1}(\mathbb{R}^{3})$. Generic  positive constant is denoted by  $C$,  $C_{1}$,  or $C_{2}$..., which may change from line to line. Let $\mathbb{H}=H\times \mathbb{R}$ with the scalar product
\begin{equation*}
  \langle\cdot,\cdot\rangle_{\mathbb{H}}=\langle\cdot,\cdot\rangle_{{H}}+\langle\cdot,\cdot\rangle_{\mathbb{R}}
\end{equation*}
and the corresponding norm
\begin{equation*}
  \|(\cdot,\cdot)\|^{2}_{\mathbb{H}}=\|\cdot\|^{2}_{H}+|\cdot|^{2}_{\mathbb{R}}.
\end{equation*}
We denote the best  constant of  $D^{1,2}(\mathbb{R}^{3})\hookrightarrow L^{6}(\mathbb{R}^{3})$ by
\begin{equation}\label{S}
  S=\inf_{u\in D^{1,2}(\mathbb{R}^{3})\setminus\{0\}}\frac{\|\nabla u\|^{2}_{2}}{\|u\|^{2}_{6}},
\end{equation}
where $D^{1,2}(\mathbb{R}^{3})$ denotes the completion of $C_{c}^{\infty}(\mathbb{R}^{3})$ with respect to the norm $$\|u\|_{D^{1,2}(\mathbb{R}^{3})}=\|\nabla u\|_{2}.$$
In \cite{Talenti}, we know that $S$ is achieved by
\begin{equation}\label{instanton}
  U_{\varepsilon}(x)=\frac{C\varepsilon^{\frac{1}{2}}}{(\varepsilon^{2}+|x|^{2})^{\frac{1}{2}}},
\end{equation}
where $\varepsilon>0$ is a parameter.

In this paper, we will find critical points of $E$ on $H_{r}^{1}(\mathbb{R}^{3})\bigcap S(c)$, where $H_{r}^{1}(\mathbb{R}^{3})\triangleq \{u\in H^{1}(\mathbb{R}^{3}):u(x)=u(|x|)\}$ is a natural constraint.

The following Gagliardo-Nirenberg-Sobolev (GNS) inequality is  crucial in our arguments: that is,  there exists a best constant $C_{p}$ depending on $p$ such that for any $u\in H$,
\begin{equation}\label{GNS}
  \|u\|_{p}^{p}\leq C_{p}\|u\|_{2}^{(1-\beta_{p})p}\|\nabla u\|_{2}^{\beta_{p}p},
\end{equation}
where $\beta_{p}=\frac{3(p-2)}{2p}$.

As in \cite{Jeanjean-1}, we introduce the useful fiber map preserving the $L^{2}$-norm, that is,
\begin{equation*}
  H(u,s)\triangleq e^\frac{3s}{2}u(e^{s}x), \quad {\rm for\ a.e}\  x\in\mathbb{R}^{3}.
\end{equation*}

Define the auxiliary functional $I: \mathbb{H} \rightarrow \mathbb{R}$ by
\begin{equation*}
  I(u,s)=E(H(u,s))=\frac{a}{2}e^{2s}\|\nabla u\|^{2}_{2}+\frac{b}{4}e^{4s}\|\nabla u\|^{4}_{2}-\frac{e^{p\beta_{p}s}}{p}\|u\|^{p}_{p}-\frac{e^{6s}}{6}\|u\|^{6}_{6}.
\end{equation*}

The Pohozaev identity plays an important role in our discussion. We  give it in the following lemma; for  more details, see \cite{Jeanjean-1}.
\begin{lemma}\label{pohozaev}
Let $(u,\lambda)\in S(c)\times \mathbb{R}$ be a weak solution of  equation (\ref{K}). Then u belongs to the set
\begin{equation*}
  V(c)\triangleq\{u\in H: P(u)=0\}
\end{equation*}
where $$P(u)=a\|\nabla u\|^{2}_{2}+b\|\nabla u\|^{4}_{2}-\beta_{p}\|u\|^{p}_{p}-\|u\|^{6}_{6}.$$
\end{lemma}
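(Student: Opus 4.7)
The plan is to exploit the $L^{2}$-preserving fiber map $H(u,s)=e^{3s/2}u(e^{s}x)$ that is already introduced just before the lemma. Since $(u,\lambda)\in S(c)\times\mathbb{R}$ is a weak solution of (\ref{K}), for every $\varphi\in H$ we have
\[
dE(u)[\varphi]=\lambda\int_{\mathbb{R}^{3}}u\varphi\,dx,
\]
so $u$ is a critical point of $E$ restricted to $S(c)$. A direct change of variables shows $\|H(u,s)\|_{2}=\|u\|_{2}=c$ for every $s\in\mathbb{R}$, hence the entire curve $s\mapsto H(u,s)$ lies in $S(c)$. By the chain rule, the scalar function $s\mapsto I(u,s)=E(H(u,s))$ then has a critical point at $s=0$.

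Next I would simply differentiate the explicit expression
\[
I(u,s)=\frac{a}{2}e^{2s}\|\nabla u\|_{2}^{2}+\frac{b}{4}e^{4s}\|\nabla u\|_{2}^{4}-\frac{e^{p\beta_{p}s}}{p}\|u\|_{p}^{p}-\frac{e^{6s}}{6}\|u\|_{6}^{6}
\]
term by term in $s$ and set $s=0$, obtaining
\[
\left.\frac{\partial I}{\partial s}(u,s)\right|_{s=0}=a\|\nabla u\|_{2}^{2}+b\|\nabla u\|_{2}^{4}-\beta_{p}\|u\|_{p}^{p}-\|u\|_{6}^{6}=P(u).
\]
Combined with the vanishing of this derivative from the previous paragraph, this gives $P(u)=0$, i.e.\ $u\in V(c)$.

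The only delicate point is to justify that $s\mapsto H(u,s)$ is of class $C^{1}$ from $\mathbb{R}$ into $H$ at $s=0$ with derivative $\tfrac{3}{2}u+x\cdot\nabla u$, so that the chain rule between the $H$-valued curve and the $C^{1}$ functional $E$ is legitimate; for $u\in H^{1}(\mathbb{R}^{3})$ this is standard and well known from the Jeanjean framework cited in the paper. As a fallback, I would keep in reserve the classical derivation: multiply (\ref{K}) by $x\cdot\nabla u$, integrate by parts (using elliptic regularity to justify the integrations) to obtain the Pohozaev identity, then test (\ref{K}) against $u$ to obtain the Nehari identity, and eliminate $\lambda c^{2}$ between the two to recover exactly $P(u)=0$.
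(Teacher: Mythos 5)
The paper does not actually prove this lemma; it only refers to \cite{Jeanjean-1}, so your proposal has to be measured against the standard argument in that literature. Your primary route --- differentiate $s\mapsto I(u,s)=E(H(u,s))$ at $s=0$ and invoke the chain rule --- computes the right quantity (indeed $\partial_s I(u,s)\big|_{s=0}=a\|\nabla u\|_2^2+b\|\nabla u\|_2^4-\beta_p\|u\|_p^p-\|u\|_6^6=P(u)$), but it rests on the claim that $s\mapsto H(u,s)$ is $C^1$ from $\mathbb{R}$ into $H$ with derivative $\tfrac32 u+x\cdot\nabla u$. This is not ``standard and well known''; it is in general \emph{false} for $u\in H^1(\mathbb{R}^3)$. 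The map $(u,s)\mapsto H(u,s)$ is only continuous (this is precisely what the paper records in (\ref{conti-H})), and $x\cdot\nabla u$ need not belong to $L^2(\mathbb{R}^3)$: a function behaving like $|x|^{-1}$ at infinity lies in $H^1(\mathbb{R}^3)$ while $x\cdot\nabla u\sim|x|^{-1}$ fails to be square integrable there. Without differentiability of the curve in $H$, the chain rule between the curve and $E$ is not legitimate, so the first argument has a genuine gap that cannot be waved away by citing the Jeanjean framework.

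Your ``fallback'' is in fact the proof, and you should promote it to the main argument. For a fixed weak solution, $\|\nabla u\|_2^2$ is a constant, so (\ref{K}) is the semilinear equation $-M\Delta u=\lambda u+|u|^{p-2}u+u^{5}$ with $M=a+b\|\nabla u\|_2^2$; elliptic regularity plus the classical Pohozaev argument (multiplying by $x\cdot\nabla u$ and integrating by parts over balls $B_{R_n}$ along a sequence $R_n\to\infty$ chosen so that the boundary terms vanish) give $\tfrac{M}{2}\|\nabla u\|_2^2=3\bigl(\tfrac{\lambda}{2}\|u\|_2^2+\tfrac1p\|u\|_p^p+\tfrac16\|u\|_6^6\bigr)$, while testing against $u$ gives $M\|\nabla u\|_2^2=\lambda\|u\|_2^2+\|u\|_p^p+\|u\|_6^6$. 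Eliminating $\lambda\|u\|_2^2$ yields $a\|\nabla u\|_2^2+b\|\nabla u\|_2^4=\tfrac{3(p-2)}{2p}\|u\|_p^p+\|u\|_6^6=\beta_p\|u\|_p^p+\|u\|_6^6$, i.e.\ $P(u)=0$. With that rearrangement the proposal is correct and matches the intended (cited) proof; as written, the emphasis is inverted, with the rigorous argument relegated to a reserve role and a heuristic presented as the proof.
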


For any $u\in S(c)$ and $s\in \mathbb{R}$, we define $\Phi_{u}(s)\triangleq I(u,s)$. Then
\begin{align*}
   (\Phi_{u})'(s)&=ae^{2s}\|\nabla u\|^{2}_{2}+be^{4s}\|\nabla u\|^{4}_{2}-\beta_{p}e^{p\beta_{p}s}\|u\|^{p}_{p}-e^{6s}\|u\|^{6}_{6}\\
   &=a\|\nabla H(u,s)\|^{2}_{2}+b\|\nabla H(u,s)\|^{4}_{2}-\beta_{p}\|H(u,s)\|^{p}_{p}-\|H(u,s)\|^{6}_{6}.
\end{align*}
Therefore, we have
\begin{lemma}\label{V(c)}
For any $u\in S(c)$, $s\in\mathbb{R}$ is a critical point for $\Phi_{u}(s)$ if and only if $H(s,u)\in V(c)$.
\end{lemma}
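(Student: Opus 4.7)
The statement is essentially a direct consequence of the computation that immediately precedes it, so my plan is to package that computation into an iff-chain rather than invent new machinery.

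First I would observe that the scaling $H(u,s)(x) = e^{3s/2} u(e^s x)$ is $L^2$-norm preserving, so $u \in S(c)$ implies $H(u,s) \in S(c)$ for every $s \in \mathbb{R}$; this keeps the argument inside the constraint and makes the membership $H(u,s) \in V(c)$ meaningful (even though $V(c)$ as defined only asks $P(\cdot) = 0$, the notation suggests we are on $S(c)$). Then, using the elementary identities $\|\nabla H(u,s)\|_2^2 = e^{2s}\|\nabla u\|_2^2$, $\|H(u,s)\|_p^p = e^{p\beta_p s}\|u\|_p^p$ (with $\beta_p = 3(p-2)/(2p)$), and $\|H(u,s)\|_6^6 = e^{6s}\|u\|_6^6$, I would rewrite the derivative already computed above in the form
\begin{equation*}
  (\Phi_u)'(s) \;=\; a\|\nabla H(u,s)\|_2^2 + b\|\nabla H(u,s)\|_2^4 - \beta_p \|H(u,s)\|_p^p - \|H(u,s)\|_6^6 \;=\; P(H(u,s)).
\end{equation*}

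With this identification, the equivalence is immediate: $s$ is a critical point of $\Phi_u$ iff $(\Phi_u)'(s) = 0$ iff $P(H(u,s)) = 0$ iff $H(u,s) \in V(c)$. No compactness, variational, or regularity argument is needed.

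There is no real obstacle here; the only thing to be careful about is that the paper's definition $V(c) = \{u \in H : P(u) = 0\}$ formally does not include the constraint $u \in S(c)$, so I would briefly flag that $H(u,s)$ automatically lies in $S(c)$ whenever $u$ does, which is why the lemma is stated for $u \in S(c)$ and why the conclusion $H(u,s) \in V(c)$ is the right notion of belonging to the Pohozaev manifold associated with the mass $c$.
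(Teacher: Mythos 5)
Your proposal is correct and matches the paper exactly: the paper's ``proof'' is precisely the preceding computation identifying $(\Phi_u)'(s)$ with $P(H(u,s))$, from which the equivalence is immediate. Your additional remark that $H(u,s)$ stays in $S(c)$ is a reasonable clarification but adds nothing essential beyond what the paper does.
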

\begin{remark}
\begin{equation}\label{conti-H}
  {\rm The \ map }\ (u,s)\in\mathbb{H}\mapsto H(u,s)\in H \quad {\rm is \ continuous};
\end{equation}
see \cite[Lemma3.5]{Bartsch-Soave}.
\end{remark}
\begin{lemma}\cite[Lemma 3.3]{Liu-Guo}\label{system}
For $t, s>0$, the following system
\begin{equation*}
  \begin{cases}
   x(t,s)=t-aS(t+s)^{\frac{1}{3}}=0\\
   y(t,s)=s-bS^{2}(t+s)^{\frac{2}{3}}=0
  \end{cases}
\end{equation*}
has a unique solution $(t_{0},s_{0})$. Moreover, if $x(t,s)\geq0$ and $y(t,s)\geq0$, then $t\geq t_{0}$, $t\geq t_{0}$.
\end{lemma}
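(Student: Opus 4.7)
The plan is to reduce the two-variable system to a one-variable equation by introducing $r = t+s$ and exploiting the homogeneity on the right-hand sides. Adding the two equations $x(t,s)=0$ and $y(t,s)=0$ yields
\begin{equation*}
r = aS\,r^{1/3} + bS^{2}\,r^{2/3},
\end{equation*}
and dividing by $r>0$ and setting $z = r^{-1/3}$ converts this to the quadratic
\begin{equation*}
aS\,z^{2} + bS^{2}\,z - 1 = 0.
\end{equation*}
Since the coefficient of $z^{2}$ is positive and the constant term is $-1$, the discriminant $b^{2}S^{4} + 4aS$ is strictly positive and the product of the two real roots equals $-1/(aS)<0$, so exactly one root $z_{0}$ is positive. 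This produces a unique $r_{0} = z_{0}^{-3}$, after which $t_{0} = aS\,r_{0}^{1/3}$ and $s_{0} = bS^{2}\,r_{0}^{2/3}$ are forced. One still has to verify consistency, i.e.\ that these $t_{0},s_{0}$ do satisfy $t_{0}+s_{0}=r_{0}$; this is exactly the quadratic identity just solved.

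For the monotonicity statement, suppose $x(t,s)\geq 0$ and $y(t,s)\geq 0$. Adding gives $r \geq aS\,r^{1/3} + bS^{2}\,r^{2/3}$, equivalently
\begin{equation*}
1 \geq aS\,r^{-2/3} + bS^{2}\,r^{-1/3}.
\end{equation*}
The right-hand side is a strictly decreasing function of $r$ on $(0,\infty)$ and equals $1$ precisely at $r=r_{0}$, so necessarily $r\geq r_{0}$. Then
\begin{equation*}
t \;\geq\; aS\,r^{1/3} \;\geq\; aS\,r_{0}^{1/3} \;=\; t_{0},
\end{equation*}
and the analogous chain using $y(t,s)\geq 0$ gives $s\geq s_{0}$ (so the second ``$t\geq t_{0}$'' in the statement is a typo for $s\geq s_{0}$).

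The argument is essentially a one-line substitution once one sees the right change of variables, so there is no deep obstacle; the only point needing care is to justify that the single scalar equation really captures both coordinates—that is, to use the individual inequalities $t\geq aS r^{1/3}$ and $s\geq bS^{2} r^{2/3}$ (not just their sum) to recover the componentwise comparison with $(t_{0},s_{0})$. Once that observation is in place, the monotonicity of the map $r\mapsto aSr^{-2/3}+bS^{2}r^{-1/3}$ finishes the proof.
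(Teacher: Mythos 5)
Your argument is correct and complete. The reduction via $r=t+s$ is sound: summing the two equations gives $r=aSr^{1/3}+bS^{2}r^{2/3}$, the substitution $z=r^{-1/3}$ turns this into $aSz^{2}+bS^{2}z-1=0$, which has exactly one positive root since the product of the roots is $-1/(aS)<0$; the unique $r_{0}$ then forces $t_{0}=aSr_{0}^{1/3}$, $s_{0}=bS^{2}r_{0}^{2/3}$, and the consistency check $t_{0}+s_{0}=r_{0}$ is precisely the scalar equation. The monotonicity part is also handled correctly, and you rightly use the \emph{individual} inequalities $t\geq aSr^{1/3}$, $s\geq bS^{2}r^{2/3}$ (not merely their sum) to get the componentwise bounds once $r\geq r_{0}$ is known from the strict monotonicity of $r\mapsto aSr^{-2/3}+bS^{2}r^{-1/3}$. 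You are also right that the second ``$t\geq t_{0}$'' in the statement is a typo for $s\geq s_{0}$. Note, however, that the paper itself offers no proof of this lemma: it is quoted verbatim from \cite[Lemma~3.3]{Liu-Guo}, so there is no in-paper argument to compare against. Your elimination to a single quadratic is an elementary, self-contained route that could stand in place of the citation; the cited source proceeds by a similar elimination of one variable, so nothing essentially new is needed, but your write-up has the merit of making the uniqueness and the sign analysis of the quadratic explicit.
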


\section{Characterization of mountain pass level}
As in \cite {Jeanjean-1, Luo-Zhang}, we firstly prove that $I(u,s)$ has the mountain pass geometry on $S(c)\times \mathbb{R}$ in the following lemmas.
\begin{lemma}\label{3.1}
Assume that $a, b>0$ and $\frac{14}{3}\leq p< 6$. Let $u\in S(c)$ be arbitrary fixed. Then
\begin{itemize}
  \item [(1)] $\int_{\mathbb{R}^{3}}|\nabla H(u,s)|^{2}\rightarrow 0,$ and $I(u,s)\rightarrow 0^{+},$  as $s\rightarrow -\infty ;$
  \item [(2)] $\int_{\mathbb{R}^{3}}|\nabla H(u,s)|^{2}\rightarrow +\infty,$ and $I(u,s)\rightarrow -\infty,$ as $s\rightarrow +\infty .$
\end{itemize}
\end{lemma}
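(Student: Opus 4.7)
The approach is essentially a careful comparison of exponential growth/decay rates of the four terms in the explicit formula
\[
I(u,s)=\frac{a}{2}e^{2s}\|\nabla u\|_{2}^{2}+\frac{b}{4}e^{4s}\|\nabla u\|_{2}^{4}-\frac{e^{p\beta_{p}s}}{p}\|u\|_{p}^{p}-\frac{e^{6s}}{6}\|u\|_{6}^{6}.
\]
The four exponents are $2,4,p\beta_{p},6$. A short computation gives $p\beta_{p}=\tfrac{3(p-2)}{2}$, so the hypothesis $\tfrac{14}{3}\le p<6$ is exactly the statement $4\le p\beta_{p}<6$. In particular $p\beta_{p}>2$ and $p\beta_{p}<6$, which will be the two crucial inequalities.

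For part (1), the claim $\|\nabla H(u,s)\|_{2}^{2}=e^{2s}\|\nabla u\|_{2}^{2}\to 0$ as $s\to-\infty$ is immediate. For the energy I would factor out the slowest-decaying exponential $e^{2s}$:
\[
I(u,s)=e^{2s}\Bigl[\tfrac{a}{2}\|\nabla u\|_{2}^{2}+\tfrac{b}{4}e^{2s}\|\nabla u\|_{2}^{4}-\tfrac{1}{p}e^{(p\beta_{p}-2)s}\|u\|_{p}^{p}-\tfrac{1}{6}e^{4s}\|u\|_{6}^{6}\Bigr].
\]
Since $p\beta_{p}-2\ge 2>0$, all three exponentials inside the bracket tend to $0$ as $s\to-\infty$, so the bracket converges to $\tfrac{a}{2}\|\nabla u\|_{2}^{2}>0$. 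Multiplying by $e^{2s}\to 0^{+}$ yields $I(u,s)\to 0^{+}$.

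For part (2), again $\|\nabla H(u,s)\|_{2}^{2}=e^{2s}\|\nabla u\|_{2}^{2}\to+\infty$ is immediate. For the energy I would factor out the fastest-growing exponential $e^{6s}$:
\[
I(u,s)=e^{6s}\Bigl[\tfrac{a}{2}e^{-4s}\|\nabla u\|_{2}^{2}+\tfrac{b}{4}e^{-2s}\|\nabla u\|_{2}^{4}-\tfrac{1}{p}e^{(p\beta_{p}-6)s}\|u\|_{p}^{p}-\tfrac{1}{6}\|u\|_{6}^{6}\Bigr].
\]
Because $p\beta_{p}-6<0$ by the hypothesis $p<6$, all three $s$-dependent terms in the bracket vanish as $s\to+\infty$, so the bracket tends to $-\tfrac{1}{6}\|u\|_{6}^{6}<0$ (note $u\neq 0$ since $\|u\|_{2}=c>0$). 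Multiplying by $e^{6s}\to+\infty$ gives $I(u,s)\to-\infty$.

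There is no real obstacle here; the only subtlety is that the endpoint $p=\tfrac{14}{3}$ forces $p\beta_{p}=4$, which is still strictly greater than $2$, so the argument in (1) remains valid at the boundary of the hypothesis. The whole proof is thus a two-line comparison of exponents once the correct exponential is factored out in each case.
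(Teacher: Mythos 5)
Your proposal is correct and follows essentially the same route as the paper, which simply declares the lemma ``trivial'' from the identity $\|\nabla H(u,s)\|_{2}^{2}=e^{2s}\|\nabla u\|_{2}^{2}$ and the explicit formula for $I(u,s)$; you have merely written out the exponent comparison ($4\le p\beta_{p}<6$, factoring out $e^{2s}$ resp.\ $e^{6s}$) that the authors leave implicit. No gap.
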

\begin{proof}
The proof  is trivial from the facts
\begin{equation*}
  \|\nabla H(s,u)\|^{2}_{2}=e^{2s}\|\nabla u\|^{2}_{2}
\end{equation*}
and
\begin{equation*}
  I(u,s)=E(H(u,s))=\frac{a}{2}e^{2s}\|\nabla u\|^{2}_{2}+\frac{b}{4}e^{4s}\|\nabla u\|^{4}_{2}-\frac{e^{p\beta_{p}s}}{p}\|u\|^{p}_{p}-\frac{e^{6s}}{6}\|u\|^{6}_{6}.
\end{equation*}
\end{proof}

\begin{lemma}\label{3.2}
Let $a, b,c>0$ and $\frac{14}{3}\leq p< 6$. Then there exists $K_{c}>0$ such that
\begin{equation*}
 P(u),E(u)> 0 \ {\rm for \ all \ } u\in A_{c}, \quad {\rm and}\quad 0<\sup_{u\in A_{c}}E(u)<\inf_{u\in B_{c}}E(u)
\end{equation*}
with
\begin{equation*}
  A_{c}=\{u\in S_{c}:\int_{\mathbb{R}^{3}}|\nabla u|^{2}\leq K_{c}\}, \quad B_{c}=\{u\in S_{c}:\int_{\mathbb{R}^{3}}|\nabla u|^{2}=2K_{c}\}.
\end{equation*}
\end{lemma}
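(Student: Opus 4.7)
The plan is to reduce both estimates to elementary one-variable analysis in $t := \|\nabla u\|_2^2$. Using the GNS inequality $(\ref{GNS})$ with $\|u\|_2 = c$ on $S(c)$, and the Sobolev inequality $(\ref{S})$, I bound the nonlinear terms by
\begin{equation*}
\|u\|_p^p \le C_p\, c^{(1-\beta_p)p}\, t^{\beta_p p/2}, \qquad \|u\|_6^6 \le S^{-3}\, t^3.
\end{equation*}
Substituting into $E$ and $P$ produces
\begin{equation*}
E(u) \ge h(t) := \tfrac{a}{2} t + \tfrac{b}{4} t^2 - \alpha_1\, t^{\beta_p p/2} - \alpha_2\, t^3, \qquad
P(u) \ge \widetilde h(t) := a\, t + b\, t^2 - \widetilde\alpha_1\, t^{\beta_p p/2} - \widetilde\alpha_2\, t^3,
\end{equation*}
with positive constants depending on $a,b,c,p,S,C_p$.

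The crucial numerical input is that $p \ge \tfrac{14}{3}$ forces $\beta_p p/2 = \tfrac{3(p-2)}{4} \ge 2$. Hence both exponents $\beta_p p/2$ and $3$ that accompany the negative terms exceed $1$, so as $t \to 0^+$ the linear contributions $\tfrac{a}{2}t$ and $a\, t$ dominate. Concretely, I will observe that
\begin{equation*}
\frac{h(t)}{t} \longrightarrow \frac{a}{2} > 0 \quad\text{and}\quad \frac{\widetilde h(t)}{t} \longrightarrow a > 0 \quad \text{as } t\to 0^+,
\end{equation*}
and therefore there exists $K_c>0$ (possibly shrunk below) such that $h(t)>0$ and $\widetilde h(t)>0$ for all $t\in(0,2K_c]$. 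This gives $E(u),P(u)>0$ on $A_c$ (and on $B_c$ as well).

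For the gap $\sup_{A_c} E < \inf_{B_c} E$, I use two elementary bounds. On $A_c$ I discard the negative terms to obtain the crude upper bound $E(u)\le \tfrac{a}{2}K_c + \tfrac{b}{4}K_c^2$. On $B_c$ I use $E(u)\ge h(2K_c)$. Subtracting,
\begin{equation*}
\inf_{B_c} E - \sup_{A_c} E \;\ge\; h(2K_c) - \tfrac{a}{2}K_c - \tfrac{b}{4}K_c^2 \;=\; \tfrac{a}{2}K_c + \tfrac{3b}{4}K_c^2 - \alpha_1 2^{\beta_p p/2} K_c^{\beta_p p/2} - 8\alpha_2 K_c^3.
\end{equation*}
Dividing by $K_c$, the right-hand side tends to $a/2>0$ as $K_c\to 0^+$ (again because $\beta_p p/2 - 1 \ge 1$ and $2>0$), so the separation holds provided $K_c$ is chosen sufficiently small. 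Shrinking $K_c$ finitely many times to satisfy all requirements completes the proof.

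I expect no genuine obstacle; the only point requiring care is to verify that the critical exponent condition $p\ge 14/3$ is exactly what makes the subcritical term $t^{\beta_p p/2}$ not destroy the linear-dominance argument, and to shrink $K_c$ once so that all three conclusions (positivity of $P$, positivity of $E$, and the sup/inf separation) hold simultaneously.
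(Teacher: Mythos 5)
Your proof is correct and follows essentially the same route as the paper: both reduce $E$ and $P$ to one-variable bounds in $\|\nabla u\|_2^2$ via the GNS and Sobolev inequalities, exploit $p\beta_p\ge 4$ (your $\beta_p p/2\ge 2$) so the linear term dominates for small gradient norm, and obtain the sup/inf gap by discarding the negative terms of $E$ on $A_c$ while lower-bounding $E$ on $B_c$.
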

\begin{proof}
Let $K>0$ be arbitrary fixed and suppose that $u,v\in S(c)$ are such that $\|\nabla u\|_{2}^{2}\leq K$ and $\|\nabla v\|_{2}^{2}=2K.$ Then, for $K>0$ small enough, using (\ref{GNS}) and $p\beta_{p}\geq4$, there exist two constants $C_{1}$
and $C_{2}$ such that
\begin{align*}
 P(u) & \geq a\|\nabla u\|^{2}_{2}+b\|\nabla u\|^{4}_{2}-C_{1}\|\nabla u\|^{p\beta_{p}}_{2}-C_{2}\|\nabla u\|^{6}_{2},
\end{align*}
\begin{align*}
 E(u) & \geq \frac{a}{2}\|\nabla u\|^{2}_{2}+\frac{b}{4}\|\nabla u\|^{4}_{2}-C_{1}\|\nabla u\|^{p\beta_{p}}_{2}-C_{2}\|\nabla u\|^{6}_{2}
\end{align*}
and
\begin{align*}
  E(v)-E(u)&\geq E(v)-\frac{a}{2}\|\nabla u\|^{2}_{2}-\frac{b}{4}\|\nabla u\|^{4}_{2}\\
           &\geq \frac{aK}{2}+\frac{bK^{2}}{2}-C_{1}\|\nabla v\|^{p\beta_{p}}_{2}-C_{2}\|\nabla v\|^{6}_{2}\\
           &\geq \frac{aK}{2}+\frac{bK^{2}}{2}-C_{1}K^{\frac{p\gamma_{p}}{2}}-C_{2}K^{3}.
\end{align*}
Therefore, by the above inequalities, it follows that there exists $K_{c}$ small enough such that
\begin{equation*}
P(u),  E(u)> 0 \ {\rm for \ all \ } x\in A_{c}, \quad {\rm and}\quad 0<\sup_{u\in A_{c}}E(u)<\inf_{u\in B_{c}}E(u)
\end{equation*}
with
\begin{equation*}
  A_{c}=\{u\in S_{c}:\int_{\mathbb{R}^{3}}|\nabla u|^{2}\leq K_{c}\}, \quad B_{c}=\{u\in S_{c}:\int_{\mathbb{R}^{3}}|\nabla u|^{2}=2K_{c}\}.
\end{equation*}
\end{proof}

Next, we give a characterization of mountain pass level for $I(u,s)$ and $E(u)$. $E^{d}$  denotes  the  set $\{ u\in S_{c}: E(u)\leq d\} $.
\begin{proposition}\label{pro3.3}
Under assumptions that $a, b>0$ and $\frac{14}{3}\leq p< 6$, let
$$\widetilde{\gamma}_{c}=\inf_{\widetilde{h}\in\widetilde{\Gamma_{c}}}\max_{t\in[0,1]}I(\widetilde{h}(t))$$
where
\begin{equation*}
  \widetilde{\Gamma_{c}}=\{\widetilde{h}\in C([0,1],S(c)\times \mathbb{R}):\widetilde{h}(0)\in(A_{c},0), \ \widetilde{h}(1)\in(E^{0},0)\},
\end{equation*}
and $$\gamma_{c}=\inf_{h\in{\Gamma_{c}}}\max_{t\in[0,1]}E(h(t))$$
where
\begin{equation*}
  {\Gamma_{c}}=\{{h}\in C([0,1],S(c)):h(0)\in A_{c},\ h(1)\in E^{0}\}.
\end{equation*}
Then we have
$$\widetilde{\gamma}_{c} ={\gamma}_{c}.$$
\end{proposition}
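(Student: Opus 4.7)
The plan is to prove the two inequalities $\widetilde{\gamma}_c \leq \gamma_c$ and $\gamma_c \leq \widetilde{\gamma}_c$ by converting paths in one space into paths in the other, using the identification $I(u,s) = E(H(u,s))$ together with the continuity of the fiber map $H$ recalled in (\ref{conti-H}). The key observations that make this work are: (i) $H(u,0) = u$, so evaluating a lift at $s=0$ gives back the original function; (ii) $H$ preserves the $L^2$-norm, so $H(u,s) \in S(c)$ whenever $u \in S(c)$; and (iii) $I$ and $E$ are linked by $I(u,s) = E(H(u,s))$.

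For the inequality $\widetilde{\gamma}_c \leq \gamma_c$, I would take an arbitrary $h \in \Gamma_c$ and lift it to $\widetilde{h} \in \widetilde{\Gamma_c}$ by setting $\widetilde{h}(t) = (h(t), 0)$. Since $h(0) \in A_c$ and $h(1) \in E^0$, we have $\widetilde{h}(0) \in (A_c, 0)$ and $\widetilde{h}(1) \in (E^0, 0)$, so $\widetilde{h} \in \widetilde{\Gamma_c}$. Because $H(u,0) = u$, we get $I(\widetilde{h}(t)) = E(H(h(t),0)) = E(h(t))$, which yields $\max_{t\in[0,1]} I(\widetilde{h}(t)) = \max_{t\in[0,1]} E(h(t))$. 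Taking infima gives $\widetilde{\gamma}_c \leq \gamma_c$.

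For the reverse inequality $\gamma_c \leq \widetilde{\gamma}_c$, given $\widetilde{h}(t) = (h_1(t), h_2(t)) \in \widetilde{\Gamma_c}$, I would set
\begin{equation*}
h(t) = H(h_1(t), h_2(t)).
\end{equation*}
By (\ref{conti-H}), $h \in C([0,1], H)$, and since $H$ preserves the $L^2$-norm, $h(t) \in S(c)$ for every $t$. At the endpoints, $h_2(0) = h_2(1) = 0$ forces $h(0) = h_1(0) \in A_c$ and $h(1) = h_1(1) \in E^0$, so $h \in \Gamma_c$. Moreover $E(h(t)) = I(\widetilde{h}(t))$, hence $\max_t E(h(t)) = \max_t I(\widetilde{h}(t))$, which upon taking infima yields $\gamma_c \leq \widetilde{\gamma}_c$.

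There is no real obstacle here: the argument is a standard equivalence between constrained mountain pass levels (as in Jeanjean's original paper) and reduces to routine bookkeeping. The only genuinely non-trivial ingredient invoked is the continuity of the scaling map $H \colon \mathbb{H} \to H$, which is why the authors recorded (\ref{conti-H}) in advance; without it the lifted path $h$ would not be known to be continuous in $H^1$, and the inequality $\gamma_c \leq \widetilde{\gamma}_c$ could not be concluded.
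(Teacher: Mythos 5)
Your proposal is correct and follows essentially the same route as the paper: the inequality $\widetilde{\gamma}_{c}\leq\gamma_{c}$ via the inclusion $\Gamma_{c}\times\{0\}\subseteq\widetilde{\Gamma_{c}}$, and the reverse inequality by projecting a path $\widetilde{h}=(\widetilde{h_{1}},\widetilde{h_{2}})$ to $h=H(\widetilde{h_{1}},\widetilde{h_{2}})$ and using $I(u,s)=E(H(u,s))$. Your write-up is slightly more explicit than the paper's about verifying the endpoint conditions and the continuity of the projected path, but the argument is the same.
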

\begin{proof}
 Since $\Gamma_{c}\times\{0\}\subseteq\widetilde{\Gamma_{c}},$  we have $\widetilde{\gamma}_{c}\leq{\gamma_{c}}$. So, it remains to prove that $\widetilde{\gamma}_{c}\geq{\gamma_{c}}$. For any  $\widetilde{h}(t)=(\widetilde{h_{1}}(t),\widetilde{h_{2}}(t))\in\widetilde{\Gamma}_{c}$, we set $h(t)=H(\widetilde{h_{1}}(t),\widetilde{h_{2}}(t))$. Then $h(t)\in\Gamma_{c}$ and
\begin{equation*}
  \max_{t\in[0,1]}I(\widetilde{h}(t))=\max_{t\in[0,1]}E(H(\widetilde{h_{1}}(t),\widetilde{h_{2}}(t))
  =\max_{t\in[0,1]}E(h(t)),
\end{equation*}
which shows that $\widetilde{\gamma}_{c}\geq{\gamma_{c}}$.
\end{proof}
In the following proposition, we give the  existence of $(PS)_{\widetilde{\gamma}_{c}}$ sequence for $I(u,s)$. Its proof is by a standard argument using the Ekeland's Variational principle and constructing pseudo-gradient flow.
\begin{proposition}\cite[Proposition2.2]{Jeanjean-1}\label{pro3.4}
Let $\{g_{n}\}\subset \widetilde{\Gamma}_{c}$ be such that
\begin{equation*}
  \max_{t\in[0,1]}I(g_{n}(t))\leq\widetilde{\gamma}_{c}+\frac{1}{n}.
\end{equation*}
Then there exists a sequence $\{(u_{n},s_{n})\}\subset S(c)\times\mathbb{R}$ such that
\begin{itemize}
  \item [(1)] $I(u_{n},s_{n}) \in [{\gamma}_{c}-\frac{1}{n},\gamma_{c}+\frac{1}{n}]$;
  \item [(2)] $\min_{t\in[0,1]} \| (u_{n},s_{n})-g_{n}(t)\|_{\mathbb{H}}\leq\frac{1}{\sqrt n}$;
  \item [(3)] $\|I'|_{S(c)\times\mathbb{R}}(u_{n},s_{n})\|\leq\frac{2}{\sqrt n}$ i.e.
  \begin{equation*}
    |\langle I'(u_{n},s_{n}),z\rangle_{\mathbb{H}^{{-1}}\times \mathbb{H}}|\leq\frac{2}{\sqrt n}\|z\|_\mathbb{H}
  \end{equation*}
  for all
  \begin{equation*}
    z\in\widetilde{T}_{(u_{n},s_{n})}\triangleq\{(z_{1},z_{2})\in \mathbb{H}:\langle u_{n},z_{1}\rangle_{L^{2}}=0\}.
  \end{equation*}

\end{itemize}
\end{proposition}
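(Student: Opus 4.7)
The plan is to carry out Jeanjean's standard argument: first apply Ekeland's variational principle on the path space $\widetilde\Gamma_c$ equipped with the uniform metric to obtain a quasi-minimal path, then use a pseudo-gradient deformation on the Hilbert submanifold $S(c)\times\mathbb R$ of $\mathbb H$ to extract the required PS point from that path.

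Since $\Psi'(u)=u\ne 0$ on $S(c)$, the set $S(c)$ is a smooth codimension-one Hilbert submanifold of $H$, so $S(c)\times\mathbb R$ is a smooth Hilbert submanifold of $\mathbb H$ whose tangent space at $(u,s)$ is precisely $\widetilde T_{(u,s)}$, and $I$ is $C^1$ on $\mathbb H$. Equip $\widetilde\Gamma_c$ with the uniform metric $d_\infty(h,\tilde h)=\max_{t\in[0,1]}\|h(t)-\tilde h(t)\|_{\mathbb H}$; then $(\widetilde\Gamma_c,d_\infty)$ is a complete metric space and $\mathcal J(h):=\max_{t\in[0,1]}I(h(t))$ is continuous on it with $\inf\mathcal J=\widetilde\gamma_c$. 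Applying Ekeland's variational principle to $\mathcal J$ with parameters $\varepsilon_n=1/n$ and $\lambda_n=1/\sqrt n$, starting from the given $g_n$, produces $\tilde g_n\in\widetilde\Gamma_c$ satisfying
\[
\mathcal J(\tilde g_n)\le\widetilde\gamma_c+\tfrac1n,\qquad d_\infty(\tilde g_n,g_n)\le\tfrac1{\sqrt n},\qquad \mathcal J(\tilde g_n)\le\mathcal J(h)+\tfrac1{\sqrt n}d_\infty(\tilde g_n,h)
\]
for every $h\in\widetilde\Gamma_c$.

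Set $M_n=\{t\in[0,1]:I(\tilde g_n(t))=\mathcal J(\tilde g_n)\}$. I claim there exists $t_n\in M_n$ for which $(u_n,s_n):=\tilde g_n(t_n)$ verifies (1)--(3). Properties (1) and (2) are immediate from the first two Ekeland estimates; for (3) I argue by contradiction. Suppose $\|I'|_{S(c)\times\mathbb R}((u,s))\|>2/\sqrt n$ on some open neighborhood $N$ of $\tilde g_n(M_n)$ in $S(c)\times\mathbb R$. Build a locally Lipschitz pseudo-gradient vector field $\mathcal V$ for $I$ on $N$, taking values tangent to $S(c)\times\mathbb R$ (via orthogonal projection of $I'$ onto $\widetilde T_{(u,s)}$), extend it by $0$ outside $N$, and multiply by a cutoff $\chi(t)$ supported in $(0,1)$ that equals $1$ on a neighborhood of $M_n$. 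Flowing along this time-dependent field for a small time $\tau>0$ yields a competitor path $h_\tau\in\widetilde\Gamma_c$ with unchanged endpoints, satisfying $\mathcal J(h_\tau)\le\mathcal J(\tilde g_n)-c\tau$ while $d_\infty(h_\tau,\tilde g_n)\le\tau$; for $\tau$ small this contradicts the Ekeland inequality.

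The main technical subtleties are (i) keeping the pseudo-gradient flow on the manifold $S(c)\times\mathbb R$, handled by the orthogonal projection onto $\widetilde T_{(u,s)}$, and (ii) ensuring the $t$-cutoff is legitimate, i.e.\ $M_n\subset(0,1)$ for $n$ large. Point (ii) follows from Lemma \ref{3.2} combined with Proposition \ref{pro3.3}: they ensure $E<\widetilde\gamma_c$ on both $A_c$ and $E^0$, and $\widetilde\gamma_c>0$ because every admissible path must cross $B_c$, where $E\ge\inf_{B_c}E>\sup_{A_c}E$. Since the abstract deformation argument is carried out in detail in \cite[Proposition 2.2]{Jeanjean-1} and depends only on the $C^1$ submanifold structure of $S(c)\times\mathbb R$ and the continuity of $I$ (both verified above), the proof is completed by invoking that result directly.
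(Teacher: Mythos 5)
Your proposal follows exactly the route the paper takes: the paper gives no proof of this proposition beyond citing \cite[Proposition 2.2]{Jeanjean-1} and noting that the argument is ``standard\ldots using the Ekeland's Variational principle and constructing pseudo-gradient flow,'' which is precisely the Ekeland-plus-tangential-deformation scheme you sketch (including the correct verification via Lemma \ref{3.2} that the maximum set stays in $(0,1)$ so the endpoints can be frozen). The sketch is sound and consistent with the cited source, so nothing further is needed.
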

\begin{proposition}\label{PPS}
Under the assumptions $a, b>0$ and $\frac{14}{3}\leq p< 6$,  there exists a sequence $\{v_{n}\}\subset S(c)$ such that
\begin{enumerate}[(1)]
      \item  $E(v_{n})\rightarrow \gamma_{c}$, as $n\rightarrow\infty$;
      \item  $P(v_{n})\rightarrow 0$, as $n\rightarrow\infty$;
      \item  $E'|_{S(c)}(v_{n})\rightarrow 0$, as $n\rightarrow\infty$  i.e.
  \begin{equation*}
    |\langle E'(v_{n}),h\rangle_{{H}^{-1}\times H}|\rightarrow 0
  \end{equation*}
uniformly for all  $h$ satisfying
  \begin{equation*}
   \|h\|_{H}\leq 1 \quad {\rm where}\quad h\in{T}_{v_{n}}\triangleq\{h\in {H}:\langle v_{n},h\rangle_{L^{2}}=0\}.
  \end{equation*}
\end{enumerate}
\end{proposition}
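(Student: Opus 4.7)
The plan is to obtain $\{v_n\}$ by pulling back a Palais--Smale-type sequence for the augmented functional $I$ on $S(c)\times\mathbb{R}$. Proposition \ref{pro3.3} identifies $\widetilde{\gamma}_c=\gamma_c$, and Proposition \ref{pro3.4} produces a candidate sequence $(u_n,s_n)\subset S(c)\times\mathbb{R}$; the natural choice is then $v_n\triangleq H(u_n,s_n)\in S(c)$. The three conclusions for $v_n$ correspond respectively to the value of $I(u_n,s_n)$ and its partial derivatives in the $s$- and $u$-directions, each of which will be controlled by the three items of Proposition \ref{pro3.4}.

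To implement this, I would first take a minimizing sequence $\gamma_n\in\Gamma_c$ for $\gamma_c$ and set $g_n(t)\triangleq(\gamma_n(t),0)\in\widetilde{\Gamma}_c$. Since $I(u,0)=E(u)$, this gives $\max_{t\in[0,1]} I(g_n(t))\le \gamma_c+\tfrac{1}{n}$, and Proposition \ref{pro3.4} provides $(u_n,s_n)$ satisfying its items (1)--(3). The critical consequence of item (2) is that, because the second coordinate of $g_n(t)$ is identically zero,
\[
|s_n|\le\min_{t\in[0,1]}\|(u_n,s_n)-(\gamma_n(t),0)\|_{\mathbb{H}}\le\tfrac{1}{\sqrt n},
\]
so $s_n\to 0$. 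This quantitative control of $s_n$ is what will make the transfer from $I$ back to $E$ work.

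Conclusions (1) and (2) for $v_n$ then follow quickly. Item (1) of Proposition \ref{pro3.4} gives $E(v_n)=I(u_n,s_n)\to\gamma_c$. Testing the bound in item (3) of Proposition \ref{pro3.4} against $z=(0,1)\in\widetilde{T}_{(u_n,s_n)}$ yields $\partial_s I(u_n,s_n)\to 0$, and the identity $(\Phi_{u_n})'(s_n)=P(H(u_n,s_n))$ already recorded in Section~2 rewrites this as $P(v_n)\to 0$.

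The main obstacle is conclusion (3), the uniform smallness of $E'|_{S(c)}(v_n)$. Given $h\in T_{v_n}$ with $\|h\|_H\le 1$, I would set $\varphi\triangleq H(h,-s_n)$. Because $H(\cdot,s)$ preserves the $L^2$ inner product one checks $\varphi\in T_{u_n}$, and because $H$ is a one-parameter group acting linearly on the first variable the chain rule gives $\partial_u I(u_n,s_n)[\varphi]=E'(v_n)[H(\varphi,s_n)]=E'(v_n)[h]$. Proposition \ref{pro3.4}(3) applied to $z=(\varphi,0)$ then yields $|E'(v_n)[h]|\le\tfrac{2}{\sqrt n}\|\varphi\|_H$. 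Since $\|\varphi\|_2=\|h\|_2$ and $\|\nabla\varphi\|_2^2=e^{-2s_n}\|\nabla h\|_2^2$, the fact that $s_n\to 0$ gives $\|\varphi\|_H\le C\|h\|_H$ uniformly for $n$ large, completing the proof of (3). The only subtle point in the argument is thus the boundedness of $s_n$, which is secured by the zero-slice embedding of the paths $g_n$ above.
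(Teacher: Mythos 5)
Your proposal is correct and follows essentially the same route as the paper: embed the minimizing paths as zero-slice paths $g_n=(\gamma_n,0)$ in $\widetilde{\Gamma}_c$, apply Proposition \ref{pro3.4}, set $v_n=H(u_n,s_n)$, read off (1) and (2) from $I(u_n,s_n)\to\gamma_c$ and $\partial_s I(u_n,s_n)=P(v_n)\to 0$, and transfer (3) via the test function $\varphi=H(h,-s_n)$ (the paper's $\widehat h_n$), whose $H$-norm is controlled because $s_n\to 0$. Your explicit justification of $s_n\to 0$ from item (2) of Proposition \ref{pro3.4} is exactly the step the paper uses (and in fact makes clearer where the paper cites the wrong display for the final norm bound).
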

\begin{proof}
By Proposition \ref{pro3.3}, $\widetilde{\gamma}_{c}= {\gamma_{c}}$. Pick $\{g_{n}=((g_{n})_{1},0)\}\subset \widetilde{\Gamma}_{c}$ such that
\begin{equation*}
  \max_{t\in[0,1]}I(g_{n}(t))\leq\widetilde{\gamma}_{c}+\frac{1}{n}.
\end{equation*}
It follows from Proposition \ref{pro3.4} that there exists a sequence $\{(u_{n},s_{n})\}\subset S(c)\times\mathbb{R}$ such that, as $n\rightarrow\infty$, one has
\begin{align}
  &I(u_{n},s_{n})\rightarrow {\gamma}_{c},\label{gamma}\\
   &s_{n}\rightarrow 0,\label{sn} \\
   &\partial_{s}I(u_{n},s_{n})\rightarrow 0.\label{partial-s}
\end{align}
Let $v_{n}=H(u_{n},s_{n})$.  Then $E(v_{n})=I(u_{n},s_{n})$  and, by (\ref{gamma}),  $(1)$ holds. For the proof of  $(2)$, we notice that
\begin{align*}
  \partial_{s}I(u_{n},s_{n})&= ae^{2s_{n}}\|\nabla u_{n}\|^{2}_{2}+be^{4s_{n}}\|\nabla u_{n}\|^{4}_{2}-e^{p\beta_{p}s_{n}}\|u_{n}\|^{p}_{p}-e^{6s_{n}}\|u_{n}\|^{6}_{6}\\
  &=a\|\nabla v_{n}\|^{2}_{2}+b\|\nabla v_{n}\|^{4}_{2}-\beta_{p}\|v_{n}\|^{p}_{p}-\|v_{n}\|^{6}_{6}\\
  &=P(v_{n}),
\end{align*}
which implies $(2)$ by (\ref{partial-s}).

For the proof of  $(3)$, let $h_{n}\in T_{v_{n}}$.  We have
\begin{align*}
  \langle E'(v_{n}),h_{n}\rangle_{H^{-1}\times H}&=a\int_{\mathbb{R}^{3}}\nabla v_{n}(x)\nabla h_{n}(x)+b\int_{\mathbb{R}^{3}}|\nabla v_{n}(x)|^{2}\int_{\mathbb{R}^{3}}\nabla v_{n}(x)\nabla h_{n}(x)\\
  &-\int_{\mathbb{R}^{3}}|v_{n}(x)|^{p-{2}}v_{n}(x)h_{n}(x)-\int_{\mathbb{R}^{3}}(v_{n}(x))^{5}h_{n}(x)\\
  &=ae^{\frac{5s_{n}}{2}}\int_{\mathbb{R}^{3}}\nabla u_{n}(e^{s_{n}}x)\nabla h_{n}(x)-e^{\frac{15s_{n}}{2}}\int_{\mathbb{R}^{3}}(u_{n}(e^{s_{n}}x))^{5}h_{n}(x)\\
  &+be^{\frac{15s_{n}}{2}}\int_{\mathbb{R}^{3}}|\nabla u_{n}(e^{s_{n}}x)|^{2}\int_{\mathbb{R}^{3}}\nabla u_{n}(e^{s_{n}}x)\nabla h_{n}(x)\\
  &-e^{\frac{3(p-1)s_{n}}{2}}\int_{\mathbb{R}^{3}}|u_{n}(e^{s_{n}}x)|^{p-{2}}u_{n}(e^{s_{n}}x)h_{n}(x)\\
  &=ae^{2s_{n}}\int_{\mathbb{R}^{3}}\nabla u_{n}(x)e^{\frac{-5s_{n}}{2}}\nabla h_{n}(e^{-s_{n}}x)\\
  &+be^{4s_{n}}\int_{\mathbb{R}^{3}}|\nabla u_{n}(x)|^{2}\int_{\mathbb{R}^{3}}\nabla u_{n}(x)e^{\frac{-5s_{n}}{2}}\nabla h_{n}(e^{-s_{n}}x)\\
  &-e^{p\beta_{p}s_{n}}\int_{\mathbb{R}^{3}}|u_{n}(x)|^{p-{2}}u_{n}(x)
  e^{\frac{-3s_{n}}{2}}h_{n}(e^{-s_{n}}x)\\
  &-e^{6s_{n}}\int_{\mathbb{R}^{3}}(u_{n}(x))^{5}e^{\frac{-3s_{n}}{2}}h_{n}(e^{-s_{n}}x).
\end{align*}
Setting $\widehat{h}_{n}(x)=e^{\frac{-3s_{n}}{2}}h_{n}(e^{-s_{n}}x),$ then
\begin{equation*}
  \langle I'(u_{n},s_{n}),(\widehat{h}_{n},0)\rangle_{\mathbb{H}^{{-1}}\times \mathbb{H}}=\langle E'(v_{n}),h_{n}\rangle_{H^{-1}\times H}.
\end{equation*}
It is easy to see that
\begin{align*}
  \langle u_{n}(x),\widehat{h}_{n}(x)\rangle_{L^{2}}&=\int_{\mathbb{R}^{3}}u_{n}(x)
  e^{\frac{-3s_{n}}{2}}h_{n}(e^{-s_{n}}x)\\
  &=\int_{\mathbb{R}^{3}}u_{n}(e^{s_{n}}x)
  e^{\frac{3s_{n}}{2}}h_{n}(x)\\
  &=\int_{\mathbb{R}^{3}}v_{n}(x)h_{n}(x)=0.
\end{align*}
So, we have that $(\widehat{h}_{n}(x),0)\in\widetilde{T}_{(u_{n},s_{n})}$. On the other hand,
\begin{align*}
  \|(\widehat{h}_{n}(x),0)\|^{2}_{\mathbb{H}}&= \|\widehat{h}_{n}(x)\|^{2}_{H} \\
  &=\|h_{n}(x)\|^{2}_{2}+e^{-2s_{n}}\|\nabla h_{n}(x)\|^{2}_{2}\\
  &\leq C\|\widehat{h}_{n}(x)\|^{2}_{H},
\end{align*}
where the last inequality holds by (\ref{partial-s}). Thus,  $(3)$ is proved.
\end{proof}
In the following lemma, we  give an upper bound estimate for the mountain pass level $\gamma_{c}$.
\begin{lemma}\label{m-p-l-upbdd}
Under assumptions $a, b>0$ and $\frac{14}{3}\leq p< 6$, then $\gamma_{c}<\gamma^{*}_{c}\triangleq \frac{abS^{3}}{4}+\frac{b^{3}S^{6}}{24}+\frac{(4as+b^{2}S^{4})^{\frac{3}{2}}}{24}$, where $S$ is defined in $(\ref{S}$).
\end{lemma}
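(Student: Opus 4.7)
The plan is to exhibit an explicit path in $\Gamma_{c}$ along which $E$ stays strictly below $\gamma_{c}^{\ast}$, using a truncated Aubin--Talenti instanton as test function. Fix a radial cutoff $\eta\in C_{c}^{\infty}(\mathbb{R}^{3})$ with $\eta\equiv 1$ on $B_{1}$ and $\eta\equiv 0$ outside $B_{2}$, set $u_{\varepsilon}=\eta U_{\varepsilon}$, and normalise to $v_{\varepsilon}=cu_{\varepsilon}/\|u_{\varepsilon}\|_{2}\in S(c)$. For $s>0$ consider the $L^{2}$-preserving dilation $\psi_{s}=H(v_{\varepsilon},\log s)\in S(c)$. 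With $\alpha=c/\|u_{\varepsilon}\|_{2}$ and the reparametrisation $s\mapsto s/\alpha$, a direct computation using $p-p\beta_{p}=(6-p)/2$ gives
\begin{equation*}
E(\psi_{s/\alpha}) \;=\; \frac{a}{2}s^{2}A + \frac{b}{4}s^{4}A^{2} - \frac{s^{p\beta_{p}}\alpha^{(6-p)/2}}{p}D - \frac{s^{6}}{6}B,
\end{equation*}
where $A=\|\nabla u_{\varepsilon}\|_{2}^{2}$, $B=\|u_{\varepsilon}\|_{6}^{6}$, $D=\|u_{\varepsilon}\|_{p}^{p}$. Observe that the only remaining $c$- and $\varepsilon$-dependent coefficient is $\alpha^{(6-p)/2}$, multiplying the $p$-term.

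The first step is to bound $\max_{s}E(\psi_{s/\alpha})$ from above by dropping the negative $p$-term. In the variable $\sigma=s^{2}A$ and with $R_{\varepsilon}=A^{3}/B$, the auxiliary function $\frac{a\sigma}{2}+\frac{b\sigma^{2}}{4}-\frac{\sigma^{3}}{6R_{\varepsilon}}$ is maximised at $\sigma^{\ast}$ solving $\sigma^{\ast\,2}=R_{\varepsilon}(a+b\sigma^{\ast})$; substituting this identity back in to eliminate the cubic term leads to the closed form
\begin{equation*}
\max_{s>0}\Bigl(\tfrac{a}{2}s^{2}A+\tfrac{b}{4}s^{4}A^{2}-\tfrac{s^{6}}{6}B\Bigr)\;=\;\frac{abR_{\varepsilon}}{4}+\frac{b^{3}R_{\varepsilon}^{2}}{24}+\frac{\sqrt{R_{\varepsilon}}(4a+b^{2}R_{\varepsilon})^{3/2}}{24},
\end{equation*}
which coincides with $\gamma_{c}^{\ast}$ precisely when $R_{\varepsilon}=S^{3}$ (using $\sqrt{S^{3}}(4a+b^{2}S^{3})^{3/2}=(4aS+b^{2}S^{4})^{3/2}$).

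To quantify the slack, I would invoke the classical Brezis--Nirenberg--Aubin--Talenti estimates in dimension three: $A=\|\nabla U_{1}\|_{2}^{2}+O(\varepsilon)$, $B=\|U_{1}\|_{6}^{6}+O(\varepsilon^{3})$, $\|u_{\varepsilon}\|_{2}^{2}=K_{0}\varepsilon+O(\varepsilon^{3})$, and (since $p\ge 14/3>3$) $D\ge c_{p}\,\varepsilon^{3-p/2}$. These give $R_{\varepsilon}=S^{3}+O(\varepsilon)$ and hence the auxiliary maximum equals $\gamma_{c}^{\ast}+O(\varepsilon)$. The Pohozaev-type identity $\partial_{s}E(\psi_{s/\alpha})|_{s=s_{\varepsilon}}=0$ is a perturbation of $\sigma^{\ast\,2}=R_{\varepsilon}(a+b\sigma^{\ast})$ whose error vanishes as $\varepsilon\to 0$, so the maximiser $s_{\varepsilon}$ converges to a positive limit and stays bounded away from $0$. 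The dropped $p$-term at $s=s_{\varepsilon}$ therefore contributes at least
\begin{equation*}
\frac{s_{\varepsilon}^{p\beta_{p}}\alpha^{(6-p)/2}}{p}\,D\;\ge\;c_{1}\,\varepsilon^{-(6-p)/4}\cdot\varepsilon^{3-p/2}\;=\;c_{1}\,\varepsilon^{(6-p)/4},
\end{equation*}
so combining the last two estimates, $\max_{s>0}E(\psi_{s/\alpha})\le\gamma_{c}^{\ast}+O(\varepsilon)-c_{1}\varepsilon^{(6-p)/4}$. Since $p\ge 14/3$ forces $(6-p)/4\le 1/3<1$, the negative correction dominates and the right-hand side is strictly less than $\gamma_{c}^{\ast}$ for all sufficiently small $\varepsilon>0$.

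Finally, by Lemma \ref{3.1}, the continuous curve $s\mapsto\psi_{s/\alpha}$ lies in $A_{c}$ for $s$ near $0$ and enters $E^{0}$ for $s$ large; a linear reparametrisation then produces a path in $\Gamma_{c}$ with maximum energy at most $\max_{s>0}E(\psi_{s/\alpha})<\gamma_{c}^{\ast}$, yielding $\gamma_{c}<\gamma_{c}^{\ast}$. The main obstacle is the competition of exponents: the cutoff instanton is not an exact Sobolev extremal, so $\max E_{\infty}$ sits an $O(\varepsilon)$ distance above $\gamma_{c}^{\ast}$, while the negative $p$-term only carries the smaller weight $\varepsilon^{(6-p)/4}$; the hypothesis $p\ge 14/3$ (in fact any $p>2$ would suffice) is exactly what ensures the latter beats the former. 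The Kirchhoff term $\frac{b}{4}\|\nabla u\|_{2}^{4}$ does not disturb this accounting—it only shifts the location of $\sigma^{\ast}$ and produces the explicit $\frac{abS^{3}}{4}+\frac{b^{3}S^{6}}{24}$ pieces of $\gamma_{c}^{\ast}$.
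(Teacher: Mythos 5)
Your proposal is correct and follows essentially the same strategy as the paper's proof: the same truncated, $L^{2}$-normalized Aubin--Talenti test functions, the same decomposition in which the $p$-term is dropped to compute the explicit maximum $\gamma_{c}^{*}+O(\varepsilon)$ of the auxiliary functional, the same observation that the dropped term contributes a negative correction of order $\varepsilon^{(6-p)/4}$ (with $(6-p)/4<1$) that beats the $O(\varepsilon)$ error, and the same final path construction via Lemma \ref{3.1}. The only cosmetic difference is your reparametrisation by $\alpha$, which keeps the maximiser at an $O(1)$ scale instead of the paper's $e^{2s_{0}}\sim\varepsilon$.
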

\begin{proof}
Let $\varphi(x)\in C^{\infty}_{c}(B_{2}(0))$ be a radial cut-off function such that $0\leq\varphi(x)\leq1$ and $\varphi(x)\equiv1$ on $B_{1}(0)$. Then we take $u_{\varepsilon}=\varphi(x)U_{\varepsilon}$ ($U_{\varepsilon}$ defined in (\ref{instanton})) and $$v_{\varepsilon}=c\frac{u_{\varepsilon}}{\|u_{\varepsilon}\|_{2}}\in S(c)\cap H_{r}^{1}.$$
We take $\varepsilon=1$ and define
\begin{equation*}
  K_{1}\triangleq \|\nabla U_{1}\|^{2}_{2}, \quad  K_{2}\triangleq \| U_{1}\|^{6}_{6},\quad K_{3}\triangleq \| U_{1}\|^{p}_{p}.
\end{equation*}
According to \cite[Appendix A]{Soave-1}, we have
\begin{equation}\label{estimate-instanton}
  \begin{cases}
K_{1}/ K_{2}=S,\quad \|\nabla u_{\varepsilon}\|^{2}_{2}=K_{1}+O(\varepsilon),\quad\|u_{\varepsilon}\|^{2}_{6}=K_{2}+O(\varepsilon^{2}),\\
\|u_{\varepsilon}\|^{p}_{p}=\varepsilon^{3-\frac{p}{2}}\left(K_{3}+O(\varepsilon^{p-3})\right),\quad \|u_{\varepsilon}\|^{2}_{2}=O(\varepsilon^{2})+\omega\left(\int^{2}_{0}\varphi(r)dr\right)\varepsilon,
  \end{cases}
\end{equation}
where $\omega$ is the area of  the unit sphere in $\mathbb{R}^{3}$. Define
$$\Psi_{v_{\varepsilon}}(s)\triangleq\frac{a}{2}e^{2s}\|\nabla v_{\varepsilon}\|^{2}_{2}+\frac{b}{4}e^{4s}\|\nabla v_{\varepsilon}\|^{4}_{2}-\frac{e^{6s}}{6}\|v_{\varepsilon}\|^{6}_{6}.$$
Then
$$(\Psi)'_{v_{\varepsilon}}(s)=ae^{2s}\|\nabla v_{\varepsilon}\|^{2}_{2}+be^{4s}\|\nabla v_{\varepsilon}\|^{4}_{2}-e^{6s}\|v_{\varepsilon}\|^{6}_{6}.$$
\textbf{Step 1:} It is easy to see that $\Psi_{v_{\varepsilon}}(s)$ has a unique critical point $s_{0},$ which is a strict maximum point such that
\begin{equation}\label{maximum point}
  e^{2s_{0}}=\frac{b\|\nabla v_{\varepsilon}\|_{2}^{4}+\sqrt{b^{2}\|\nabla v_{\varepsilon}\|_{2}^{8}+4a\|\nabla v_{\varepsilon}\|_{2}^{2}\| v_{\varepsilon}\|_{6}^{6}}}{2\| v_{\varepsilon}\|^{6}_{6}}
\end{equation}
and the maximum level of $\Psi_{v_{\varepsilon}}(s)$ is
\begin{align}\label{max-esti}
\Psi_{v_{\varepsilon}}(s_{0})&=\frac{a}{2}\|\nabla v_{\varepsilon}\|^{2}_{2}\left(\frac{b\|\nabla v_{\varepsilon}\|_{2}^{4}+\sqrt{b^{2}\|\nabla v_{\varepsilon}\|_{2}^{8}+4a\|\nabla v_{\varepsilon}\|_{2}^{2}\| v_{\varepsilon}\|_{6}^{6}}}{2\| v_{\varepsilon}\|^{6}_{6}}\right)\nonumber\\
&+\frac{b}{4}\|\nabla v_{\varepsilon}\|^{4}_{2}{\left(\frac{b\|\nabla v_{\varepsilon}\|_{2}^{4}+\sqrt{b^{2}\|\nabla v_{\varepsilon}\|_{2}^{8}+4a\|\nabla v_{\varepsilon}\|_{2}^{2}\| v_{\varepsilon}\|_{6}^{6}}}{2\| v_{\varepsilon}\|^{6}_{6}}\right)}^{2}\nonumber\\
&-\frac{1}{6}\| v_{\varepsilon}\|^{6}_{6}{\left(\frac{b\|\nabla v_{\varepsilon}\|_{2}^{4}+\sqrt{b^{2}\|\nabla v_{\varepsilon}\|_{2}^{8}+4a\|\nabla v_{\varepsilon}\|_{2}^{2}\| v_{\varepsilon}\|_{6}^{6}}}{2\| v_{\varepsilon}\|^{6}_{6}}\right)}^{3}\nonumber\\
&=\frac{ab\|\nabla v_{\varepsilon}\|^{6}_{2}}{4\|v_{\varepsilon}\|^{6}_{6}}+\frac{b^{3}\|\nabla v_{\varepsilon}\|^{12}_{2}}{24\|v_{\varepsilon}\|^{12}_{6}}+\frac{\left({b^{2}\|\nabla v_{\varepsilon}\|_{2}^{8}+4a\|\nabla v_{\varepsilon}\|_{2}^{2}\| v_{\varepsilon}\|_{6}^{6}}\right)^{\frac{3}{2}}}{24\|v_{\varepsilon}\|^{12}_{6}}.
\end{align}
By (\ref{estimate-instanton}), we conclude that
\begin{align*}
  &\frac{ab\|\nabla v_{\varepsilon}\|^{6}_{2}}{4\|v_{\varepsilon}\|^{6}_{6}}=\frac{ab}{4}\frac{\|\nabla u_{\varepsilon}\|^{6}_{2}}{\|u_{\varepsilon}\|^{6}_{6}}=\frac{ab}{4}\left(\frac{K_{1}+O(\varepsilon)}{K_{2}+
  O(\varepsilon^{2})}\right)^{3}=\frac{ab}{4}S^{3}+O(\varepsilon);\\
  &\frac{b^{3}\|\nabla v_{\varepsilon}\|^{12}_{2}}{24\|v_{\varepsilon}\|^{12}_{6}}=\frac{b^{3}}{24}\frac{\|\nabla u_{\varepsilon}\|^{12}_{2}}{\|u_{\varepsilon}\|^{12}_{6}}=\frac{b^{3}}{24}\left(\frac{K_{1}+O(\varepsilon)}{K_{2}+
  O(\varepsilon^{2})}\right)^{6}=\frac{b^{3}}{24}S^{6}+O(\varepsilon).\\
  &
\end{align*}
For the last term in (\ref{max-esti})
\begin{align*}
  \frac{\left({b^{2}\|\nabla v_{\varepsilon}\|_{2}^{8}+4a\|\nabla v_{\varepsilon}\|_{2}^{2}\| v_{\varepsilon}\|_{6}^{6}}\right)^{\frac{3}{2}}}{24\|v_{\varepsilon}\|^{12}_{6}}&=\frac{1}{24}
  \left((b^{2}\frac{K_{1}}{K_{2}}+O(\varepsilon))^{4} +4a(\frac{K_{1}}{K_{2}}+O(\varepsilon))\right)^{\frac{3}{2}} \\
   & =\frac{1}{24}\left(4aS +b^{2}S^{4}+O(\varepsilon)\right)^{\frac{3}{2}}\\
   & =\frac{1}{24}\left(4aS +b^{2}S^{4}\right)^{\frac{3}{2}}+O(\varepsilon).
\end{align*}
By the above estimates, one has
\begin{equation}\label{m-p-esti}
  \Psi_{v_{\varepsilon}}(s_{0})=\frac{abS^{3}}{4}+\frac{b^{3}S^{6}}{24}+\frac{\left(4aS +b^{2}S^{4}\right)^{\frac{3}{2}}}{24}+O(\varepsilon).
\end{equation}
\textbf{Step 2:} We give an upper bound estimate for $\Phi_{v_{\varepsilon}}(s)=I(v_{\varepsilon},s)$. Note that
\begin{equation*}
   (\Phi_{v_{\varepsilon}})'(s)=ae^{2s}\|\nabla v_{\varepsilon}\|^{2}_{2}+be^{4s}\|\nabla v_{\varepsilon}\|^{4}_{2}-\beta_{p}e^{p\beta_{p}s}\|v_{\varepsilon}\|^{p}_{p}
   -e^{6s}\|v_{\varepsilon}\|^{6}_{6}.
\end{equation*}
Obviously, $\Phi_{v_{\varepsilon}}(s)$ has a unique critical point $s_{1}$ and
\begin{equation*}
  e^{2s_{1}}\leq e^{2s_{0}}=\frac{b\|\nabla v_{\varepsilon}\|_{2}^{4}+\sqrt{b^{2}\|\nabla v_{\varepsilon}\|_{2}^{8}+4a\|\nabla v_{\varepsilon}\|_{2}^{2}\| v_{\varepsilon}\|_{6}^{6}}}{2\| v_{\varepsilon}\|^{6}_{6}}.
\end{equation*}
Since
\begin{equation*}
  (\Phi_{v_{\varepsilon}})'(s_{1})=e^{2s_{1}}\|\nabla v_{\varepsilon}\|^{2}_{2}+e^{4s_{1}}\|\nabla v_{\varepsilon}\|^{4}_{2}-\beta_{p}e^{p\beta_{p}s_{1}}\|v_{\varepsilon}\|^{p}_{p}
   -e^{6s_{1}}\|v_{\varepsilon}\|^{6}_{6}=0,
\end{equation*}
in view of  the definition of $\beta_{p}$ with $p\beta_{p}\geq4$ and (\ref{estimate-instanton}),  we have
\begin{align}\label{s-1-esti}
e^{2s_{1}}&= \frac{b|\nabla v_{\varepsilon}\|_{2}^{4}}{\|v_{\varepsilon}\|^{6}_{6}}+e^{-2s_{1}}\frac{a\|\nabla v_{\varepsilon}\|^{2}}{\|v_{\varepsilon}\|^{6}_{6}}-\beta_{p} e^{2s_{1}{\frac{(p\beta_{p}-4)}{2}}}\frac{\|v_{\varepsilon}\|^{p}_{p}}{\|v_{\varepsilon}\|^{6}_{6}}\nonumber\\
&\geq\frac{b|\nabla v_{\varepsilon}\|_{2}^{4}}{\|v_{\varepsilon}\|^{6}_{6}}+e^{-2s_{0}}\frac{a\|\nabla v_{\varepsilon}\|^{2}}{\|v_{\varepsilon}\|^{6}_{6}}-\beta_{p} e^{2s_{0}{\frac{(p\beta_{p}-4)}{2}}}\frac{\|v_{\varepsilon}\|^{p}_{p}}{\|v_{\varepsilon}\|^{6}_{6}}\nonumber\\
&=\frac{b}{c^{2}}\frac{\|\nabla u_{\varepsilon}\|^{4}_{2}}{\| u_{\varepsilon}\|^{4}_{6}}\cdot\frac{\| u_{\varepsilon}\|^{2}_{2}}{\| u_{\varepsilon}\|^{2}_{6}}\nonumber\\
&+\frac{a\|\nabla v_{\varepsilon}\|^{2}_{2}}{\|v_{\varepsilon}\|^{6}_{6}}\cdot \frac{2\| v_{\varepsilon}\|^{6}_{6}}{b\|\nabla v_{\varepsilon}\|_{2}^{4}+\sqrt{b^{2}\|\nabla v_{\varepsilon}\|_{2}^{8}+4a\|\nabla v_{\varepsilon}\|_{2}^{2}\| v_{\varepsilon}\|_{6}^{6}}}\nonumber\\
&-\frac{\gamma_{p}\| v_{\varepsilon}\|^{p}_{p}}{\|v_{\varepsilon}\|^{6}_{6}}\cdot\left(\frac{b\|\nabla v_{\varepsilon}\|_{2}^{4}+\sqrt{b^{2}\|\nabla v_{\varepsilon}\|_{2}^{8}+4a\|\nabla v_{\varepsilon}\|_{2}^{2}\| v_{\varepsilon}\|_{6}^{6}}}{2\| v_{\varepsilon}\|^{6}_{6}}\right)^{\frac{(p\beta_{p}-4)}{2}}\nonumber\\
&=C_{1}\varepsilon+O(\varepsilon^{2})+C_{2}\varepsilon+O(\varepsilon^{2}) -C_{3}(\varepsilon+O(\varepsilon^{2}))^{\frac{11}{2}-\frac{3p}{4}}.
\end{align}
In view of
\begin{align*}
\Phi_{v_{\varepsilon}}(s_{1})&=\Psi_{v_{\varepsilon}}(s_{1})-
\frac{e^{p\beta_{p}s_{1}}}{p}\|v_{\varepsilon}\|^{p}_{p}\\
&\leq\Psi_{v_{\varepsilon}}(s_{0})-
\frac{e^{p\beta_{p}s_{1}}}{p}\|v_{\varepsilon}\|^{p}_{p}
\end{align*}
and
$$\|v_{\varepsilon}\|^{p}_{p}=C_{4}\varepsilon^{3-p}+O(1),$$
letting $\varepsilon\rightarrow 0$, we have
\begin{align*}
         \Phi_{v_{\varepsilon}}(s_{1})&\leq \frac{abS^{3}}{4}+\frac{b^{3}S^{6}}{24}+\frac{\left(4aS +b^{2}S^{4}\right)^{\frac{3}{2}}}{24}+O(\varepsilon)-
         \frac{e^{p\beta_{p}s_{1}}}{p}\|v_{\varepsilon}\|^{p}_{p}\\
         &\leq \frac{abS^{3}}{4}+\frac{b^{3}S^{6}}{24}+\frac{\left(4aS +b^{2}S^{4}\right)^{\frac{3}{2}}}{24}+O(\varepsilon)-(e^{2s_{1}})^{\frac{p\beta_{p}}{2}}
         \frac{\|v_{\varepsilon}\|^{p}_{p}}{p}\\
         &\leq \frac{abS^{3}}{4}+\frac{b^{3}S^{6}}{24}+\frac{\left(4aS +b^{2}S^{4}\right)^{\frac{3}{2}}}{24}+O(\varepsilon)-C_{5}\varepsilon^{\frac{6-p}{4}}\\
         &<\frac{abS^{3}}{4}+\frac{b^{3}S^{6}}{24}+\frac{\left(4aS +b^{2}S^{4}\right)^{\frac{3}{2}}}{24},
\end{align*}
 by (\ref{m-p-esti}) and (\ref{s-1-esti}).

\noindent\textbf{Step 3:} Take $\varepsilon$ small enough such that $v_{\varepsilon}$ satisfies the above inequality. By Lemma \ref{3.1} and (\ref{conti-H}), there exist $s^{-}\ll -1$ and $s^{+}\gg 1$ such that
\begin{equation}\label{path}
  h_{v_{\varepsilon}}:\tau\in[0,1]\mapsto H(v_{\varepsilon},(1-\tau)s^{-}+\tau s^{+})\in \Gamma.
\end{equation}
Therefore,
\begin{equation*}
  \gamma_{c}\leq \max_{\tau\in[0,1]}E( h_{v_{\varepsilon}}(\tau))\leq\Phi_{v_{\varepsilon}}(s_{1})
  <\frac{abS^{3}}{4}+\frac{b^{3}S^{6}}{24}+\frac{\left(4aS +b^{2}S^{4}\right)^{\frac{3}{2}}}{24}.
\end{equation*}
By letting  $\gamma^{*}_{c}\triangleq \frac{abS^{3}}{4}+\frac{b^{3}S^{6}}{24}+\frac{(4as+b^{2}S^{4})^{\frac{3}{2}}}{24}$, the conclusion follows from the above inequality.
\end{proof}
Let $m(c)\triangleq \inf_{u\in V(c)} E(u)$, where $ V(c)$ is the Pohozaev manifold.  We have    the following relationship between $\gamma_{c}$ and $m(c)$.
\begin{lemma}\label{gammac-mc}
Under the assumptions  $a, b>0$ and $\frac{14}{3}\leq p< 6$, we have  that
\begin{equation*}
  m(c)=\gamma_{c}>0.
\end{equation*}
\end{lemma}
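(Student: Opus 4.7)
The plan is to establish $m(c) = \gamma_c$ via a pair of opposite inequalities and then derive positivity directly from the Pohozaev constraint.

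\textbf{Step 1 ($\gamma_c \leq m(c)$).} For any $u \in V(c)$, I would use the fiber $s \mapsto H(u,s)$ to build an admissible path realizing $E(u)$. The crux is that $\Phi_u(s) = I(u,s)$ has a unique critical point that is its global maximum. Since $\Phi_u'(s) = P(H(u,s))$, setting $g(s) \triangleq e^{-4s}\Phi_u'(s)$ and differentiating one sees
\begin{equation*}
g'(s) = -2a e^{-2s}\|\nabla u\|_2^2 - \beta_p(p\beta_p - 4) e^{(p\beta_p - 4)s}\|u\|_p^p - 2 e^{2s}\|u\|_6^6 < 0,
\end{equation*}
where the middle coefficient is non-positive because $p \geq 14/3$ forces $p\beta_p \geq 4$. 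Hence $\Phi_u'$ has at most one zero, and by Lemma \ref{V(c)} that unique zero is $s=0$ when $u \in V(c)$, so $\max_s \Phi_u(s) = \Phi_u(0) = E(u)$. Using Lemma \ref{3.1} to select $s^- \ll 0$ with $H(u, s^-) \in A_c$ and $s^+ \gg 0$ with $E(H(u, s^+)) \leq 0$, the path $\tau \mapsto H(u, (1-\tau)s^- + \tau s^+)$ lies in $\Gamma_c$ with maximum energy $E(u)$. Infimizing over $V(c)$ gives $\gamma_c \leq m(c)$.

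\textbf{Step 2 ($\gamma_c \geq m(c)$).} I would show every $h \in \Gamma_c$ must cross $V(c)$ by proving $P$ changes sign along it. By Lemma \ref{3.2}, $P(h(0)) > 0$ since $h(0) \in A_c$. At the other endpoint, a direct computation yields the identity
\begin{equation*}
P(u) - 4E(u) = -a\|\nabla u\|_2^2 + \Bigl(\tfrac{4}{p} - \beta_p\Bigr)\|u\|_p^p - \tfrac{1}{3}\|u\|_6^6,
\end{equation*}
whose right-hand side is strictly negative for every $u \in S(c)$, because $\tfrac{4}{p}-\beta_p = \tfrac{14-3p}{2p} \leq 0$ under $p \geq 14/3$ and $\|\nabla u\|_2 > 0$. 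Combined with $E(h(1)) \leq 0$ this forces $P(h(1)) < 0$. Continuity of $\tau \mapsto P(h(\tau))$ and the intermediate value theorem then supply $\tau_* \in (0,1)$ with $h(\tau_*) \in V(c)$, yielding $\max_\tau E(h(\tau)) \geq E(h(\tau_*)) \geq m(c)$, hence $\gamma_c \geq m(c)$.

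\textbf{Step 3 ($m(c) > 0$).} The same identity with $P(u) = 0$ rewrites as $E(u) \geq \tfrac{a}{4}\|\nabla u\|_2^2 + \tfrac{1}{12}\|u\|_6^6$ on $V(c)$, so positivity reduces to a uniform lower bound on $\|\nabla u\|_2$. Writing $P(u) = 0$ as $a\|\nabla u\|_2^2 + b\|\nabla u\|_2^4 = \beta_p\|u\|_p^p + \|u\|_6^6$ and estimating the right-hand side via (\ref{GNS}) and (\ref{S}) by $C(c)\|\nabla u\|_2^{p\beta_p} + S^{-3}\|\nabla u\|_2^6$, the condition $p\beta_p \geq 4 > 2$ forces a positive constant $\delta(c)$ with $\|\nabla u\|_2^2 \geq \delta(c)$ on $V(c)$, whence $m(c) \geq a\delta(c)/4 > 0$.

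The main obstacle is the clean verification that $P$ strictly changes sign along every admissible path; this reduces to the algebraic identity for $P - 4E$ whose sign is exactly controlled by $p \geq 14/3$. The uniqueness of the critical point of $\Phi_u$ in Step 1 is the only other nontrivial ingredient, and it hinges on the same exponent condition via the monotonicity of $g$.
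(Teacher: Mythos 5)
Your proof is correct, and its overall architecture coincides with the paper's: identify the unique fiber-maximum on each ray $s\mapsto H(u,s)$ to get $\gamma_c\le m(c)$, run the intermediate value theorem on $P$ along admissible paths to get $\gamma_c\ge m(c)$, and use the GNS/Sobolev coercivity of $P=0$ for positivity (your Step 3 is literally the paper's Step 4). The two places where you deviate are both local simplifications. For uniqueness of the critical point of $\Phi_u$ you use strict monotonicity of $g(s)=e^{-4s}\Phi_u'(s)$, whereas the paper computes $(\Phi_u)''(t_u)=-2a\|\nabla H(u,t_u)\|_2^2-\beta_p(p\beta_p-4)\|H(u,t_u)\|_p^p-2\|H(u,t_u)\|_6^6<0$ at every critical point; these are interchangeable, both resting on $p\beta_p\ge 4$. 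More substantively, to get $P(h(1))<0$ you invoke the pointwise identity $P(u)-4E(u)=-a\|\nabla u\|_2^2+\bigl(\tfrac{4}{p}-\beta_p\bigr)\|u\|_p^p-\tfrac{1}{3}\|u\|_6^6<0$ on all of $S(c)$, which immediately converts $E(h(1))\le 0$ into $P(h(1))<0$; the paper instead proves the implication ``$E(u)\le 0\Rightarrow P(u)<0$'' dynamically, by showing that $E(u)\le 0$ forces the fiber maximum $t_u$ to be negative and then reading off $P(u)=\Phi_u'(0)<0$ from the monotonicity of $\Phi_u$ on $(t_u,+\infty)$. Your algebraic route is shorter and avoids re-entering the fiber analysis in that step; the paper's route reuses the Step 1 machinery and makes the geometric picture (where $0$ sits relative to $t_u$) explicit, which it also exploits elsewhere. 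A final cosmetic difference: you argue directly on $\Gamma_c$, while the paper passes through $\widetilde\Gamma_c$ and Proposition \ref{pro3.3}; both are legitimate since the endpoints characterizations are the same.
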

\begin{proof}
\textbf{Step 1:} We claim that for every $u\in S(c)$, there exists a unique $t_{u}\in \mathbb{R}$ such that $H(u,t_{u})\in V(c)$, where $t_{u}$ is a strict  maximum point for $\Phi_{u}(s)$ at a positive level.

The  existence of  $t_{u}$ follows from Lemma \ref{V(c)}. The uniqueness is  from the following reasoning.  Noticing that
\begin{align*}
   (\Phi_{u})''(s)&=2ae^{2s}\|\nabla u\|^{2}_{2}+4be^{4s}\|\nabla u\|^{4}_{2}-p{\beta_{p}}^{2}e^{p\beta_{p}s}\|u\|^{p}_{p}-6e^{6s}\|u\|^{6}_{6}\\
   &=2a\|\nabla H(u,s)\|^{2}_{2}+4b\|\nabla H(u,s)\|^{4}_{2}-p{\beta_{p}}^{2}\|H(u,s)\|^{p}_{p}-6\|H(u,s)\|^{6}_{6},
\end{align*}
combining with $(\Phi_{u})'(t_{u})=0$, we have
\begin{equation*}
  (\Phi_{u})''(t_{u})=-2a\|\nabla H(u,t_{u})\|^{2}_{2}-\beta_{p}(p{\beta_{p}}-4)\|H(u,t_{u})\|^{p}_{p}-2\|H(u,t_{u})\|^{6}_{6}<0.
\end{equation*}
By Lemma \ref{3.1},  $\Phi_{u}(s)$ has a global maximum point at a positive level.

\noindent\textbf{Step 2:} We claim that $E(u)\leq 0$ implies $P(u)<0$.

Since $(\Phi_{u})''(t_{u})<0$, we know that $\Phi_{u}$ is strictly decreasing and concave on $(t_{u},+\infty)$. By the claim of Step 1,
\begin{equation*}
E(H(u,s))\leq E(H(u,0))=E(u)\leq 0 \quad  {\rm for}\ s\geq0.
\end{equation*}
So, we have $t_{u}<0$. Since
\begin{equation*}
P(H(u,t_{u}))= (\Phi_{u})'(t_{u})=0 \quad {\rm and } \quad P(u)=P(H(u,0))= (\Phi_{u})'(0),
\end{equation*}
we obtain that $P(u)< 0.$

\noindent\textbf{Step 3:} $\gamma_{c}=m(c)$.

Let $u\in S(c)$.  As in (\ref{path}), we define a path
\begin{equation*}
  h_{u}:\tau\in[0,1]\mapsto H(u,(1-\tau)s^{-}+\tau s^{+})\in \Gamma.
\end{equation*}
By Step 1, we have
\begin{equation*}
  \max_{\tau\in[0,1]}E( h_{u}(\tau))\geq \gamma_{c}.
\end{equation*}
  So we have $\gamma_{c}\leq m(c)$. On the other hand, for any  $\widetilde{h}(\tau)=(\widetilde{h_{1}}(\tau),\widetilde{h_{2}}(\tau))\in\widetilde{\Gamma}_{c}$, we consider the function
\begin{equation*}
  \widetilde{P}(\tau)=P(H(\widetilde{h_{1}}(\tau),\widetilde{h_{2}}(\tau)))\in\mathbb{R}.
\end{equation*}
Since $H(\widetilde{h_{1}}(0),\widetilde{h_{2}}(0))=\widetilde{h_{1}}(0)\in A_{c}$ and $H(\widetilde{h_{1}}(1),\widetilde{h_{2}}(1))=\widetilde{h_{1}}(1)\in E^{0},$ hence by Lemma \ref{3.2}, we deduce that
\begin{equation*}
 \widetilde{P}(0)=\widetilde{P}( \widetilde{h_{1}}(0))>0,
\end{equation*}
and using the claim of Step 2,
\begin{equation*}
  \widetilde{P}(1)=\widetilde{P}( \widetilde{h_{1}}(1))<0.
\end{equation*}
The function  $\widetilde{P}(\tau)$ is continuous  by (\ref{conti-H}), and hence we deduce that there exists $\bar{\tau}\in(0,1)$ such that $\widetilde{P}(\bar{\tau})=0$, which  implies that $H(\widetilde{h_{1}}(\bar{\tau}),\widetilde{h_{2}}(\bar{\tau}))\in V(c)$ and
\begin{equation*}
  \max_{\tau\in[0,1]}I(\widetilde{h}(\tau))=\max_{\tau\in[0,1]}
  E(H(\widetilde{h_{1}}(\tau),\widetilde{h_{2}}(\tau)))\geq \inf_{u\in V(c)}E(u).
\end{equation*}
So, we infer that $\widetilde{\gamma}_{c} ={\gamma}_{c}\geq m(c)$.

\noindent\textbf{Step 4:} At last, we will prove that $\gamma_{c}>0$.

If $u\in V(c)$, then $P(u)=0$  and by GNS inequality (\ref{GNS}), we deduce that
\begin{equation*}
a\|\nabla u\|^{2}_{2}+b\|\nabla u\|^{4}_{2} \leq C\|\nabla u\|_{2}^{\beta_{p}p}+C\|\nabla u\|_{2}^{6},
\end{equation*}
 which implies that there exists $\delta>0$ such that $\inf_{V(c)}\|\nabla u\|_{2}\geq \delta$. For any $u\in V(c)$, in view of $p\beta_{p}\geq 4$, we obtain that
\begin{equation*}
  E(u)=E(u)-\frac{1}{4}P(u)=\frac{a}{4}\|\nabla u\|^{2}_{2}+(\frac{\beta_{p}}{4}-\frac{1}{p})\|u\|^{p}_{p}+\frac{1}{12}\|u\|^{6}_{6}\geq \frac{a\delta}{4}>0.
\end{equation*}
Thus, $\gamma_{c}>0$.
\end{proof}

\section{Proof of Theorem \ref{main reslut}}
Choosing a $PS$ sequence $\{v_{n}\}$ as in Proposition \ref{PPS} and applying Lagrange multipliers rule to $(3)$ of Proposition \ref{PPS},  there exists a sequence $\{\lambda_{n}\}\subset\mathbb{R}$ such that
\begin{equation}\label{Lagrange}
E'(v_{n})-\lambda_{n}\Psi'(v_{n})\rightarrow 0\quad {\rm in}\quad H^{-1}.
\end{equation}
\textbf{Step 1:} We claim that $v_{n}$ is bounded in $H$ and up to a subsequence, $v_{n}\rightharpoonup v$ in $H$. Since $P(v_{n})\rightarrow 0$, we have
\begin{equation*}
  E(v_{n})+o_{n}(1)=E(v_{n})-\frac{1}{4}P(v_{n})=\frac{a}{4}\|\nabla v_{n}\|^{2}_{2}+(\frac{\beta_{p}}{4}-\frac{1}{p})\|v_{n}\|^{p}_{p}+\frac{1}{12}\|v_{n}\|^{6}_{6}.
\end{equation*}
Then, using the fact $p\beta_{p}\geq 4$, we deduce that
\begin{equation*}
 \frac{a}{4}\|\nabla v_{n}\|^{2}_{2}\leq\gamma_{c}+o_{n}(1).
\end{equation*}
Thus, $v_{n}$ is bounded in $H$. Since the embedding $H_{r}^{1}(\mathbb{R}^{3})\hookrightarrow L^{q}(\mathbb{R}^{3})$ is compact for $q\in(2,6)$, we deduce that there exists $v\in H_{r}^{1}(\mathbb{R}^{3})$ such that, up to a subsequence, $v_{n}\rightharpoonup v$ in $H$.

\noindent\textbf{Step 2:} We will prove that, up to a subsequence, ${\lambda_{n}}\rightarrow\lambda<0.$

Since  $v_{n}$ is bounded in $H$, by (\ref{Lagrange}), we know that
\begin{equation*}
  E'(v_{n})v_{n}-\lambda_{n}\Psi'(v_{n})v_{n}=o_{n}(1).
\end{equation*}
Therefore,
\begin{equation}\label{lamda}
  \lambda_{n}\|v_{n}\|^{2}_{2}=a\|\nabla v_{n}\|^{2}_{2}+b\|\nabla v_{n}\|^{4}_{2}-\|v_{n}\|^{p}_{p}-\|v_{n}\|^{6}_{6}.
\end{equation}
Using the fact that $\|v_{n}\|^{2}_{2}=c^{2}$ and $ \{v_{n}\}$ is bounded in $H$, we deduce that $\{\lambda_{n}\}$ is bounded.  Hence, up to a subsequence, ${\lambda_{n}}\rightarrow\lambda\in\mathbb{R}.$  Putting $P(v_{n})\rightarrow 0$ into (\ref{lamda}), we obtain that
\begin{align}\label{L-2L-p}
  \lambda c^{2}&=\lim_{n\rightarrow \infty}a\|\nabla v_{n}\|^{2}_{2}+b\|\nabla v_{n}\|^{4}_{2}-\|v_{n}\|^{p}_{p}-\|v_{n}\|^{6}_{6}\\
  &=(\beta_{p}-1)\|v\|^{p}_{p}\leq0.
\end{align}
Hence, $\lambda=0$ if and only if $v\equiv0$. Therefore, we only need to prove that  $v\neq0$.
We assume by contradiction that $v\equiv0$.  Up to a subsequence, let $\|\nabla v_{n}\|^{2}_{2}\rightarrow l\in\mathbb{R}.$ Since $P(v_{n})\rightarrow 0$ and $v_{n}\rightarrow 0$ in  $L^{p}(\mathbb{R}^{3}),$  we have
\begin{equation*}
 \|v_{n}\|^{6}_{6}\rightarrow al+bl^{2}.
\end{equation*}
By (\ref{S}), one has
\begin{equation*}
  (al+bl^{2})^{\frac{1}{3}}\leq \frac{l}{S}.
\end{equation*}
So, we have
\begin{equation*}
l=0 \ \ {\rm or}\ \ l\geq \frac{bS^{3}}{2}+\sqrt{\frac{b^{2}S^{6}}{4}+{aS^{3}}}.
\end{equation*}
If $l> 0$, then we have
\begin{align*}
  m(c)+o_{n}(1)&=  E(v_{n})=E(v_{n})-\frac{1}{6}P(v_{n})+o_{n}(1)\\
   &= \frac{a}{3}\|\nabla v_{n}\|^{2}_{2}+\frac{b}{12}\|\nabla v_{n}\|^{4}_{2}+o_{n}(1)\\
  & = \frac{a}{3}l+\frac{b}{12}l^{2}+o_{n}(1).
\end{align*}
Hence $m(c)=\frac{a}{3}l+\frac{b}{12}l^{2}$. On the other hand, by $l\geq \frac{bS^{3}}{2}+\sqrt{\frac{b^{2}S^{6}}{4}+{aS^{3}}},$ we infer that
\begin{align*}
  m(c)&= \frac{a}{3}l+\frac{b}{12}l\\
   &\geq \frac{a}{3}\left(\frac{bS^{3}}{2}+\sqrt{\frac{b^{2}S^{6}}{4}+{aS^{3}}}\right)
   +\frac{b}{12}\left(\frac{bS^{3}}{2}+\sqrt{\frac{b^{2}S^{6}}{4}+{aS^{3}}}\right)^{2}\\
   &=\frac{abS^{3}}{4}+\frac{b^{3}S^{6}}{24}+\frac{(4aS+b^{2}S^{4})^{\frac{3}{2}}}{24}=\gamma^{*}_{c}.
\end{align*}
By Lemma \ref{m-p-l-upbdd}, this contradicts to $m(c)=\gamma_{c}<\gamma^{*}_{c}$. If $l=0,$ then we have
\begin{equation*}
\|\nabla v_{n}\|^{2}_{2}\rightarrow 0, \quad \|v_{n}\|^{6}_{6}\rightarrow 0,
\end{equation*}
 implying that  $E(v_{n})\rightarrow 0$, which is a contradiction to $\gamma_{c}>0$.

\noindent\textbf{Step 3:} Since $\lambda <0$, we can define an equivalent norm of $H$
\begin{equation*}
  \|u\|^{2}=a\int_{\mathbb{R}^{3}}|\nabla u(x)|^{2}dx-\lambda\int_{\mathbb{R}^{3}}|u(x)|^{2}dx.
\end{equation*}
Up to a subsequence, let $\lim_{n\rightarrow\infty}\|\nabla v_{n}\|_{2}^{2}=A^{2}>0$. Then $v$ satisfies
\begin{equation}\label{weak-solu}
 a\int_{\mathbb{R}^{3}}\nabla v\nabla\phi+bA^{2}\int_{\mathbb{R}^{3}}\nabla v\nabla\phi-\lambda\int_{\mathbb{R}^{3}}v\phi-\int_{\mathbb{R}^{3}}|v|^{p-2}v\phi
 -\int_{\mathbb{R}^{3}}v^{5}\phi=0
\end{equation}
for any $\phi\in H^{1}_{0}(\mathbb{R}^{3})$. Let
\begin{equation*}
  J_{\lambda}(u)= \frac{a}{2}\int_{\mathbb{R}^{3}}|\nabla u|^{2}-\frac{\lambda}{2}\int_{\mathbb{R}^{3}}| u|^{2}+\frac{bA^{2}}{2}\int_{\mathbb{R}^{3}}|\nabla u|^{2}-\frac{1}{p}\int_{\mathbb{R}^{3}}|u|^{p}-\frac{1}{6}\int_{\mathbb{R}^{3}}|u|^{6}.
\end{equation*}
Then we have $ J'_{\lambda}(v)=0$ and $\{v_{n}\}$ is a $PS$ sequence of $J_{\lambda}(u)$. The Pohozaev identity associated with (\ref{weak-solu}) is
\begin{equation*}
  P_{\lambda}(u)=\frac{a}{2}\int_{\mathbb{R}^{3}}|\nabla u|^{2}-\frac{3\lambda}{2}\int_{\mathbb{R}^{3}}| u|^{2}+\frac{bA^{2}}{2}\int_{\mathbb{R}^{3}}|\nabla u|^{2}-\frac{3}{p}\int_{\mathbb{R}^{3}}|u|^{p}-\frac{1}{2}\int_{\mathbb{R}^{3}}|u|^{6}.
\end{equation*}
Hence, we have
\begin{align}\label{J-re}
 J_{\lambda}(v)&=J_{\lambda}(v)-\frac{1}{3}P_{\lambda}(v)=\frac{a+bA^{2}}{3}\int_{\mathbb{R}^{3}}|\nabla v|^{2}\nonumber\\
 &=\frac{a+bA^{2}}{4}\int_{\mathbb{R}^{3}}|\nabla v|^{2}+\frac{1}{4}(J_{\lambda}(v)-P_{\lambda}(v)).
\end{align}
Let $w_{n}=v_{n}-v$. Then we have $w_{n}\rightharpoonup  0$ in $H$ and $w_{n}\rightharpoonup  0$ in $L^{q}(\mathbb{R}^{3})$ for $2<q<6$. Using Brezis-Lieb lemma,
\begin{equation}\label{B-L-1}
  \begin{cases}
\|w_{n}\|^{2}=\|v_{n}\|^{2}-\|v\|^{2}+o_{n}(1);\\
\|w_{n}\|_{6}^{6} =\|v_{n}\|_{6}^{6}-\|v\|_{6}^{6}+o_{n}(1);   \\
A^{2}=\|\nabla v_{n}\|_{2}^{2}+o_{n}(1)=\|\nabla w_{n}\|_{2}^{2}+\|\nabla v\|_{2}^{2}+o_{n}(1).
\end{cases}
\end{equation}
Since $\{v_{n}\}$ is a $PS$ sequence of $J_{\lambda}$, it follows (\ref{B-L-1}) that
\begin{align*}
  o_{n}(1)&=\langle J'_{\lambda}(v_{n}),v_{n}\rangle\\
  &=\langle J'_{\lambda}( v),v\rangle+(a+bA^{2})\int_{\mathbb{R}^{3}}|\nabla w_{n}|^{2}-\lambda\int_{\mathbb{R}^{3}}|w_{n}|^{2}-\int_{\mathbb{R}^{3}} |w_{n}|^{6}+ o_{n}(1)\\
  &=\|w_{n}\|^{2}+b\|\nabla w_{n}\|_{2}^{4}+b\int_{\mathbb{R}^{3}}|\nabla w_{n}|^{2}\int_{\mathbb{R}^{3}}|\nabla v|^{2}-\|w_{n}\|_{6}^{6}+ o_{n}(1),
\end{align*}
which yields
\begin{equation}\label{B-L-2}
 \|w_{n}\|^{2}+b\|\nabla w_{n}\|_{2}^{4}+b\int_{\mathbb{R}^{3}}|\nabla w_{n}|^{2}\int_{\mathbb{R}^{3}}|\nabla v|^{2}-\|w_{n}\|_{6}^{6}=o_{n}(1).
\end{equation}
Up to a subsequence, we assume that
\begin{equation*}
  \|w_{n}\|^{2}\rightarrow l_{1}\geq 0,\quad b\|\nabla w_{n}\|_{2}^{4}+b\int_{\mathbb{R}^{3}}|\nabla w_{n}|^{2}\int_{\mathbb{R}^{3}} |\nabla v|^{2}\rightarrow l_{2}\geq 0 ,\quad|w_{n}\|_{6}^{6}\rightarrow l_{3}\geq 0.
\end{equation*}
Then we have $l_{1}+l_{2}=l_{3}$. If $l_{1}=0$, we obtain $v_{n}\rightarrow v $ in $H$,  which completes the proof.
If $l_{1}>0$, by Sobolev inequality \ref{S}, we have
\begin{equation*}
 l_{1}\geq aS(l_{1}+l_{2})^{\frac{1}{3}}\quad {\rm  and} \quad  l_{2}\geq bS^{2}(l_{1}+l_{2})^{\frac{2}{3}}.
\end{equation*}
By Lemma \ref{system}, we have
\begin{equation}\label{l-esti}
 l_{1}\geq \frac{abS^{3}+a\sqrt{b^{2}S^{6}+4aS^{3}}}{2}\quad {\rm  and} \quad l_{2}\geq abS^{3}+\frac{b^{3}S^{6}+b^{2}S^{3}\sqrt{b^{2}S^{6}+4aS^{3}}}{2}.
\end{equation}
From (\ref{B-L-1}) and (\ref{B-L-2}), we deduce that
\begin{align}\label{B-L-3}
 J_{\lambda}(v_{n})&=J_{\lambda}(v)+\frac{(a+bA^{2})}{2}\int_{\mathbb{R}^{3}}|\nabla w_{n}|^{2}-\frac{\lambda}{2}\int_{\mathbb{R}^{3}}|w_{n}|^{2}-\frac{1}{6}\int_{\mathbb{R}^{3}} |w_{n}|^{6}+ o_{n}(1)\nonumber\\
 &=J_{\lambda}(v)+\frac{1}{2}\|w_{n}\|^{2}+\frac{b}{4}\left(\|\nabla w_{n}\|_{2}^{4}+\int_{\mathbb{R}^{3}}|\nabla w_{n}|^{2}\int_{\mathbb{R}^{3}}|\nabla v|^{2}\right)\nonumber\\
 &-\|w_{n}\|_{6}^{6}+ \frac{bA^{2}}{4}\int_{\mathbb{R}^{3}}|\nabla w_{n}|^{2}+o_{n}(1)\nonumber\\
 &=J_{\lambda}(v)+\frac{1}{3}\|w_{n}\|^{2}+\frac{b}{12}\left(\|\nabla w_{n}\|_{2}^{4}+\int_{\mathbb{R}^{3}}|\nabla w_{n}|^{2}\int_{\mathbb{R}^{3}}|\nabla v|^{2}\right)\nonumber\\
 &+ \frac{bA^{2}}{4}\int_{\mathbb{R}^{3}}|\nabla w_{n}|^{2}+o_{n}(1).
\end{align}
On the the other hand, since  $E(v_{n})\rightarrow \gamma_{c}$, we have
\begin{equation}\label{J-level}
 J_{\lambda}(v_{n})= \gamma_{c}-\frac{\lambda}{2}\|v_{n}\|_{2}^{2}+\frac{b}{4}A^{2}+o_{n}(1).
\end{equation}
In view of (\ref{J-re}), (\ref{l-esti}), (\ref{B-L-3})  and (\ref{J-level}), we infer that
\begin{align*}
   \gamma_{c}-\frac{\lambda}{2}\|v_{n}\|_{2}^{2}+\frac{b}{4}A^{2}&=\frac{1}{3}\|w_{n}\|^{2}+\frac{b}{12}\left(\|\nabla w_{n}\|_{2}^{4}+\int_{\mathbb{R}^{3}}|\nabla w_{n}|^{2}\int_{\mathbb{R}^{3}}|\nabla v|^{2}\right)\\
 &+J_{\lambda}(v)+ \frac{bA^{2}}{4}\int_{\mathbb{R}^{3}}|\nabla w_{n}|^{2}+o_{n}(1) \\
  &\geq\frac{abS^{3}}{4}+\frac{b^{3}S^{6}}{24}+\frac{(4as+b^{2}S^{4})^{\frac{3}{2}}}{24}+J_{\lambda}(v)+ \frac{bA^{2}}{4}\int_{\mathbb{R}^{3}}|\nabla w_{n}|^{2}+o_{n}(1) \\
  &=\gamma^{*}_{c}+\frac{a+bA^{2}}{4}\int_{\mathbb{R}^{3}}|\nabla v|^{2}+\frac{bA^{2}}{4}\int_{\mathbb{R}^{3}}|\nabla w_{n}|^{2}\\
  &+\frac{\lambda}{4}\|v\|_{2}^{2}+\frac{1}{2p}\|v\|_{p}^{p}+\frac{1}{12}\|v\|_{6}^{6}+o_{n}(1) \\
  &\geq \gamma^{*}_{c}+\frac{b}{4}A^{2}+\frac{\lambda}{4}\|v\|_{2}^{2}+\frac{1}{2p}\|v\|_{p}^{p}+o_{n}(1),
\end{align*}
which implies that
\begin{align*}
  \gamma_{c}&\geq\gamma^{*}_{c}+\frac{\lambda}{4}\|v\|_{2}^{2}
  +\frac{1}{2p}\|v\|_{p}^{p}+\frac{\lambda}{2}\|v_{n}\|_{2}^{2}+o_{n}(1)\\
  &\geq\gamma^{*}_{c}+\frac{3\lambda}{4}\|v_{n}\|_{2}^{2}
  +\frac{1}{2p}\|v\|_{p}^{p}+o_{n}(1)\\
  &=\gamma^{*}_{c}+\frac{3\lambda}{4}c^{2}
  +\frac{1}{2p}\|v\|_{p}^{p}\\
  &=\gamma^{*}_{c}+\left(\frac{3}{4}(\beta_{p}-1)+\frac{1}{2p}\right)\|v\|_{p}^{p}\geq\gamma^{*}_{c},
\end{align*}
where the last inequality is  from (\ref{L-2L-p}) and $p\beta_{p}\geq 4$. This is a contradiction to $\gamma_{c}<\gamma^{*}_{c}$. So we obtain $v_{n}\rightarrow v $ in $H$. And $E(v)=\inf_{u\in V(c)} E(u)$ follows from lemma \ref{gammac-mc}. \qed

%%%%%%%%%%%%%%%%%%%%%%%%%%%%%%%%%%%%%%%%%%%%%%%%%%%%%%%%%%

\end{document}